\newenvironment{proof}{{\noindent \it Proof.}}{\hfill $\blacksquare$\par}
\newtheorem{theorem}{Theorem}[section]
\newtheorem{lemma}[theorem]{Lemma}
\begin{document}

\title{Extremal cacti with respect to Sombor index}
\author{Hechao Liu\thanks{Corresponding author}
 \\
{\small School of Mathematical Sciences, South China Normal University,}\\ {\small Guangzhou, 510631, P. R. China}\\
 \small {\tt hechaoliu@m.scnu.edu.cn}\\\
\small { (Received August 21, 2021)}
}
\date{}
\maketitle
\begin{abstract}
Recently, a novel topological index, Sombor index, was introduced by Gutman, defined as $SO(G)=\sum\limits_{uv\in E(G)}\sqrt{d_{u}^{2}+d_{v}^{2}}$, where $d_{u}$ denotes the degree of vertex $u$.

In this paper, we first determine the maximum Sombor index among cacti with $n$ vertices and $t$ cycles, then determine the maximum Sombor index among cacti with perfect matchings. We also characterize corresponding maximum cacti.
\end{abstract}
\noindent{\bf Keywords}: Sombor index; cactus; extremal value.

\hskip0.2cm

\noindent{\bf 2020 Mathematics Subject Classification}: 05C09, 05C35, 05C92.
\maketitle
\maketitle

\makeatletter
\renewcommand\@makefnmark%
{\mbox{\textsuperscript{\normalfont\@thefnmark)}}}
\makeatother

 \baselineskip=0.30in

\section{Introduction}

In this paper, all notations and terminologies can refer to Bondy and Murty \cite{jaus2008}.

The Sombor index and reduced Sombor index are defined as \cite{gumn2021}
$$SO(G)=\sum_{uv\in E(G)}\sqrt{d_{u}^{2}+d_{v}^{2}},$$
$$SO_{red}(G)=\sum_{uv\in E(G)}\sqrt{(d_{u}-1)^{2}+(d_{v}-1)^{2}}.$$

Immediately after, R. Cruz et al.\cite{ctra2021} studied the extremal Sombor index among unicyclic graphs and bicyclic graphs. The same author also \cite{rirm2021} determined the extremal Sombor index of chemical graphs. At the same time, using different methods, Deng et al. \cite{dengt2021} also determined molecular trees with extremal values of Sombor indices. Wang et al. \cite{wmlf2021} obtain the relations between Sombor and other degree-based indices. Liu et al. \cite{lyhu2021} ordered the chemical graphs by their Sombor index. Red\v{z}epovi\'{c} \cite{redz2021} studied chemical applicability of Sombor indices.
Other results can be found in \cite{algh2021,chli2021,dxsa2021,fyli2021,guma2021,huli2021,hoxu2021,milo2021,lyfh2021,tliu2021,trdo2021,zhou2021,zylh2021}.

Cacti is a graph that any two cycles have at most one common vertex. Denote by $\mathcal{H}(n,t)$, $\mathcal{C}(2\beta,t)$, the collection of cacti with $n$ vertices and $t$ cycles, the collection of cacti with perfect matchings, $2\beta$ vertices and $t$ cycles, respectively.
There are a lot of research about topological indices of graphs with perfect matching or given matching numbers, we can refer to \cite{duzu2010,lidt19,ling20,made2011,zhou2021} and references cited therein.

In this paper, we first determine the maximum Sombor index among cacti $\mathcal{H}(n,t)$, then determine the maximum Sombor index among cacti $\mathcal{C}(2\beta,t)$. We also characterize corresponding maximum cacti.

\section{Preliminaries}

In the following, we give a few important lemmas which will be useful in the main results.

\begin{figure}[ht!]
  \centering
  \scalebox{.18}[.18]{\includegraphics{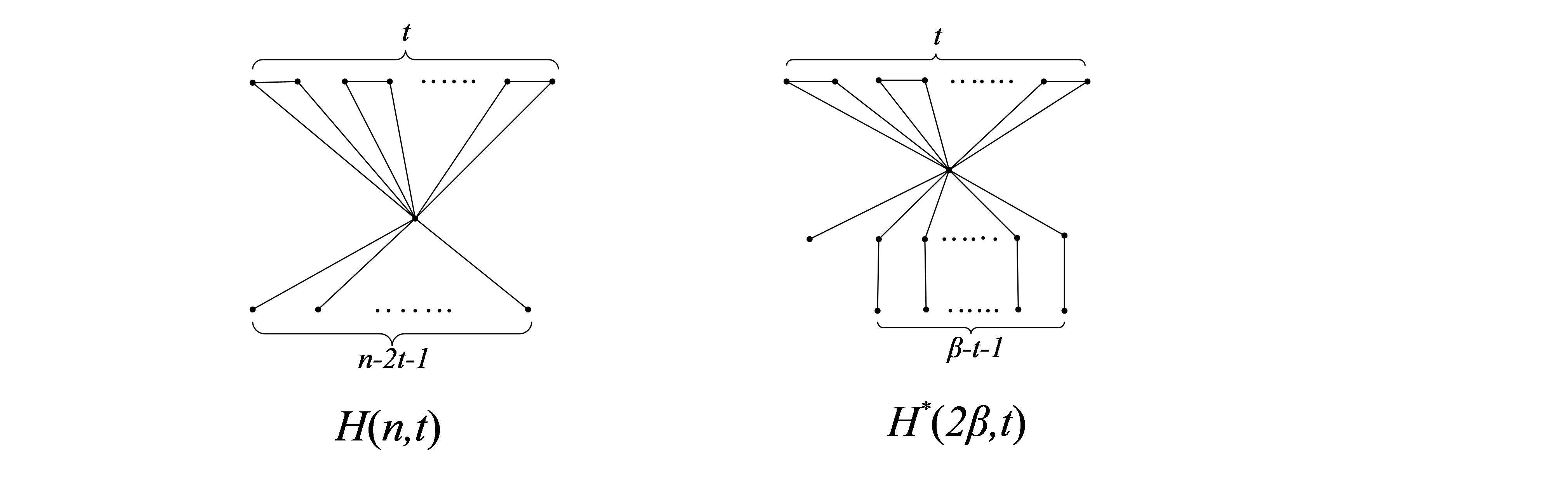}}
  \caption{Cacti $H(n,t)$ and $H^{*}(2\beta,t)$.}
 \label{fig-1}
\end{figure}

\begin{lemma}\label{l-21}\cite{gumn2021}
Let $G\in \mathcal{H}(n,0)$, then $SO(G)\leq (n-1)\sqrt{(n-1)^{2}+1}$, with equality iff $G\cong H(n,0)$.
\end{lemma}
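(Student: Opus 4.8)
The plan is to reduce the global bound to a sharp \emph{per-edge} estimate and then sum over the $n-1$ edges of the tree $G\in\mathcal{H}(n,0)$. The first ingredient is the elementary structural fact that for every edge $uv$ of a tree one has $d_u+d_v\le n$: the three vertex sets $N(u)\setminus\{v\}$, $N(v)\setminus\{u\}$ and $\{u,v\}$ are pairwise disjoint — a common neighbour of $u$ and $v$ would produce a triangle, contradicting acyclicity — and their union, which has exactly $(d_u-1)+(d_v-1)+2=d_u+d_v$ elements, is contained in $V(G)$.

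Next I would show that the constraints $d_u,d_v\ge 1$ and $d_u+d_v\le n$ force $d_u^2+d_v^2\le (n-1)^2+1$. Assume without loss of generality $d_u\le d_v$; then $d_v\le n-d_u$, so $d_u^2+d_v^2\le d_u^2+(n-d_u)^2=:f(d_u)$, where $f$ is convex and $d_u$ ranges over $[1,n/2]$ (since $2d_u\le d_u+d_v\le n$). A convex function on an interval attains its maximum at an endpoint, and $f(1)-f(n/2)=(n-2)^2/2\ge 0$, so $f(d_u)\le f(1)=1+(n-1)^2$. Consequently $\sqrt{d_u^2+d_v^2}\le\sqrt{(n-1)^2+1}$ for each edge, and summing over all $n-1$ edges of $G$ yields $SO(G)\le (n-1)\sqrt{(n-1)^2+1}$.

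For equality, I would trace back when every step is tight on every edge. For $n\ge 3$, tightness in $d_u^2+d_v^2\le d_u^2+(n-d_u)^2\le 1+(n-1)^2$ forces $d_u=1$ and $d_v=n-1$ (the cases $n\le 2$ being immediate), so $G$ has a vertex of degree $n-1$; as a tree on $n$ vertices has exactly $n-1$ edges, all of them are incident with that vertex, hence $G\cong K_{1,n-1}=H(n,0)$, and conversely the star attains the bound. The only point that needs genuine care — hence the main (if mild) obstacle — is this equality analysis: one must verify that $(d_u,d_v)=(n-1,1)$ is the \emph{unique} maximizer of $d_u^2+d_v^2$ under the constraints, i.e.\ that competitors such as $(n-2,2)$ are strictly worse, which again comes down to $(n-2)^2>0$; the rest of the argument is routine.
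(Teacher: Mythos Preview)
Your argument is correct. The per-edge bound $d_u+d_v\le n$ for trees, the convexity step giving $d_u^2+d_v^2\le (n-1)^2+1$, the summation over the $n-1$ edges, and the equality analysis all go through as you describe. One cosmetic remark: the phrase ``which again comes down to $(n-2)^2>0$'' is slightly off for the specific competitor $(n-2,2)$ (the relevant difference there is $2(n-3)$), but this is harmless, since your convexity argument already dispatches \emph{all} competitors at once and the uniqueness of the maximizer $(1,n-1)$ for $n\ge 3$ follows from strict convexity together with $f(1)>f(n/2)$.

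As for comparison with the paper: the paper does not prove Lemma~\ref{l-21} at all --- it is quoted as a known result from Gutman~\cite{gumn2021}. So there is no in-paper argument to compare against. Your self-contained edgewise proof is a perfectly good way to supply what the paper omits, and it is in the same spirit as Gutman's original treatment (bounding each edge contribution by the extremal degree pair and recognising the star as the unique extremal tree).
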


\begin{lemma}\label{l-22}\cite{ctra2021}
Let $G\in \mathcal{H}(n,1)$, then $SO(G)\leq (n-3)\sqrt{(n-1)^{2}+1}+2\sqrt{(n-1)^{2}+4}+2\sqrt{2}$, with equality iff $G\cong H(n,1)$.
\end{lemma}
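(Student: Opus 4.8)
The plan is to start from a graph $G\in\mathcal H(n,1)$ of maximum Sombor index, let $C$ be its unique cycle, and repeatedly replace $G$ by another member of $\mathcal H(n,1)$ of strictly larger Sombor index, so that the maximizer is forced to be $H(n,1)$. The engine is a grafting lemma of the following kind: if $v$ carries, off the cycle, a pendant tree or forest, $u$ is a vertex of $G$ not in that structure, and $d_u\ge d_v$, then deleting the structure from $v$ and reattaching each of its vertices as a pendant leaf of $u$ produces a graph in $\mathcal H(n,1)$ with strictly larger Sombor index. To prove a lemma of this kind one expands $SO(G')-SO(G)$ as a signed sum of terms of the forms $\sqrt{(d_u+s)^2+c^2}-\sqrt{d_u^2+c^2}$, $\sqrt{(d_u+s)^2+c^2}-\sqrt{d_v^2+c^2}$ and $-\bigl(\sqrt{d_v^2+c^2}-\sqrt{(d_v-s)^2+c^2}\bigr)$, where $s$ counts the edges removed from $v$, and then uses that, for fixed $s$, the increment $t\mapsto\sqrt{(t+s)^2+c^2}-\sqrt{t^2+c^2}$ is nondecreasing in $t$ (convexity of $t\mapsto\sqrt{t^2+c^2}$) and decreasing in $c$, together with $d_u\ge d_v$.

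Granting this, the argument proceeds in two stages. First, repeated grafting forces the extremal $G$ to be a graph $G_g$: a cycle $C_g$ together with $n-g$ pendant edges attached at a single one of its vertices. Indeed, grafting inside a pendant tree hanging off $C$ collapses it into pendant edges at its attachment point (or at a global maximum-degree vertex); grafting across cycle vertices then collects all pendant edges at one vertex of $C$; and a handful of direct comparisons of small graphs rule out the remaining possibilities (for instance the pendant edges being rooted at a vertex off $C$). Second, a computation gives $SO(G_g)=(n-g)\sqrt{(n-g+2)^2+1}+2\sqrt{(n-g+2)^2+4}+2(g-2)\sqrt2$, and a routine estimate shows this is strictly decreasing in $g$ for $3\le g\le n$ --- the difference of consecutive values being dominated by a term of order $2(n-g)$ against a correction of at most $2\sqrt2$ --- so the maximum over $\mathcal H(n,1)$ is attained uniquely at $g=3$, that is, at $G\cong H(n,1)$, which gives $SO(G)\le(n-3)\sqrt{(n-1)^2+1}+2\sqrt{(n-1)^2+4}+2\sqrt2$.

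The main obstacle is the analytic core of the grafting lemma, which is trickier than it first looks. Because the transformation raises every edge-weight incident to $u$ but lowers every edge-weight incident to $v$, the inequality $SO(G')\ge SO(G)$ is not termwise; worse, its most naive form --- moving a single pendant edge to a vertex of at least its degree --- can actually decrease $SO$ when $d_u$ and $d_v$ are both large, so one is forced either to move whole pendant subtrees at once (a larger $s$ helps the surplus) or to split into cases governed by $d_u$, $d_v$ and the degrees of their neighbours. One must also keep track of strictness throughout in order to conclude that $H(n,1)$ is the unique maximizer, and verify the few small-graph comparisons invoked above. Everything else is bookkeeping: each grafting move preserves connectedness, keeps the graph unicyclic and on $n$ vertices, and the process terminates because $\mathcal H(n,1)$ is finite while $SO$ strictly increases at every step; the monotonicity in $g$ used at the end is elementary.
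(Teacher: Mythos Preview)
The paper itself gives no proof of this lemma: it is simply cited from Cruz and Rada \cite{ctra2021}. What the paper \emph{does} prove is the general case in Theorem~\ref{t-31}, by induction on $n+t$ (delete a pendant block, or a short piece of a cycle, apply the inductive bound, and control the boundary terms via Lemmas~\ref{l-23}--\ref{l-25}); specialising that inductive argument to $t=1$ would yield a self-contained proof along lines quite different from yours.

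Your second stage is fine: the formula for $SO(G_g)$ is correct and the monotonicity in $g$ is genuinely routine. The first stage, however, has a gap beyond the one you already flag. You dispose of ``the remaining possibilities (for instance the pendant edges being rooted at a vertex off $C$)'' as ``a handful of direct comparisons of small graphs'', but this is an infinite family, not a finite one. Take $C_3=v_1v_2v_3$, a single edge $v_1w$, and $n-4$ pendants at $w$. Then $d_w=n-3$ is the unique maximum degree, every vertex outside the pendant block at $v_1$ has degree at most $3$, and your grafting lemma---which needs a target $u$ outside the moved structure with $d_u\ge d_v$---never fires: with $v=v_1$ the only candidates are $v_2,v_3$ of degree $2<3$, and with $v=w$ no vertex reaches degree $n-3$. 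So this whole family survives your reduction and must be compared with $H(n,1)$ directly (or handled by a different transformation, e.g.\ rerouting a cycle edge through $w$), for all $n$. Together with the unproved analytic core of the grafting lemma that you already acknowledge, this leaves the proposal as a sketch with two substantive holes rather than a proof.
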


\begin{lemma}\label{l-23}
Let $f_{1}(x)=\sqrt{x^{2}+d^{2}}-\sqrt{x^{2}+(d-r)^{2}}$, $d,r$ are constants, $0<r\leq d$. Then $f_{1}(x)$ is a monotonically decreasing function.
\end{lemma}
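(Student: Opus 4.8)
The plan is to get rid of the difference of two square roots by rationalizing, which turns $f_1$ into something whose monotonicity is transparent. Multiplying and dividing by the conjugate $\sqrt{x^{2}+d^{2}}+\sqrt{x^{2}+(d-r)^{2}}$ gives
\[
f_{1}(x)=\frac{(x^{2}+d^{2})-(x^{2}+(d-r)^{2})}{\sqrt{x^{2}+d^{2}}+\sqrt{x^{2}+(d-r)^{2}}}
=\frac{d^{2}-(d-r)^{2}}{\sqrt{x^{2}+d^{2}}+\sqrt{x^{2}+(d-r)^{2}}}
=\frac{r(2d-r)}{\sqrt{x^{2}+d^{2}}+\sqrt{x^{2}+(d-r)^{2}}}.
\]
Under the standing hypotheses that make the lemma meaningful in the applications, namely $0<r<2d$ (equivalently $d>|d-r|$, so that $d^{2}>(d-r)^{2}$), the numerator $r(2d-r)$ is a positive constant, while the denominator is a sum of two functions each strictly increasing in $x$ on $[0,\infty)$. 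Hence $f_{1}$ is a positive constant divided by a strictly increasing positive function, and is therefore strictly decreasing on the relevant range of degree values.

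As an alternative I would differentiate directly:
\[
f_{1}'(x)=\frac{x}{\sqrt{x^{2}+d^{2}}}-\frac{x}{\sqrt{x^{2}+(d-r)^{2}}}
=x\left(\frac{1}{\sqrt{x^{2}+d^{2}}}-\frac{1}{\sqrt{x^{2}+(d-r)^{2}}}\right),
\]
and since $d^{2}>(d-r)^{2}$ forces $\sqrt{x^{2}+d^{2}}>\sqrt{x^{2}+(d-r)^{2}}$, hence $\tfrac{1}{\sqrt{x^{2}+d^{2}}}<\tfrac{1}{\sqrt{x^{2}+(d-r)^{2}}}$, the parenthesized factor is negative; combined with $x\ge 0$ this gives $f_{1}'(x)\le 0$, with strict inequality for $x>0$. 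Either route is essentially a one-line computation, so there is no genuine obstacle here.

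The one point that does need attention is the sign of the constant $d^{2}-(d-r)^{2}=r(2d-r)$: the claimed monotonic decrease holds precisely when $0<r<2d$, and $f_1$ is constant when $r=0$ or $r=2d$ and increasing when $r$ lies outside $[0,2d]$. In every invocation of this lemma $d$ plays the role of a large degree (typically $n-1$) and $d-r$ the role of a strictly smaller nonnegative degree, so this condition is automatically satisfied; I would simply state it explicitly as a hypothesis and then read off the conclusion from the rationalized form above.
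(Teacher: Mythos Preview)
Your second approach (direct differentiation) is exactly the paper's proof, and your first (rationalization) is a minor variant of the same idea; both are correct. You are also right to flag the implicit hypothesis $d^{2}>(d-r)^{2}$ (equivalently $0<r<2d$), which the paper uses without stating.
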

\begin{proof}
$f_{1}^{'}(x)=\frac{x}{\sqrt{x^{2}+d^{2}}}-\frac{x}{\sqrt{x^{2}+(d-r)^{2}}}<0$. Thus $f_{1}(x)$ is a monotonically decreasing function.
\end{proof}

\begin{lemma}\label{l-24}
Let $f_{2}(x)=r\sqrt{x^{2}+1^{2}}+(x-r)\sqrt{x^{2}+2^{2}}+(x-r)\sqrt{(x-r)^{2}+2^{2}}$ $($$r$ are constants$)$ and $x\geq r$, then $f_{2}(x)$ is a monotonically increasing function.
\end{lemma}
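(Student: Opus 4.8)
The plan is to prove monotonicity by showing that the derivative $f_2'(x)$ is positive on the whole range $x\geq r$ (here $r\geq 0$ is a constant, as it is in every application of this lemma, so that $x\geq r\geq 0$). First I would differentiate $f_2$ term by term, applying the product rule to each of the three summands and the chain rule to the radicals:
$$f_2'(x)=\frac{rx}{\sqrt{x^{2}+1}}+\sqrt{x^{2}+4}+\frac{x(x-r)}{\sqrt{x^{2}+4}}+\sqrt{(x-r)^{2}+4}+\frac{(x-r)^{2}}{\sqrt{(x-r)^{2}+4}}.$$

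Next I would combine the second and third terms over the common denominator $\sqrt{x^{2}+4}$, and the fourth and fifth terms over $\sqrt{(x-r)^{2}+4}$, which gives
$$f_2'(x)=\frac{rx}{\sqrt{x^{2}+1}}+\frac{2x^{2}-rx+4}{\sqrt{x^{2}+4}}+\frac{2(x-r)^{2}+4}{\sqrt{(x-r)^{2}+4}}.$$
Then it suffices to verify that each of the three summands is nonnegative, with at least one strictly positive. The first is $\geq 0$ since $r\geq 0$ and $x\geq r\geq 0$; the third is clearly $>0$; and for the middle summand the hypothesis $x\geq r$ yields $rx\leq x^{2}$, hence $2x^{2}-rx+4\geq x^{2}+4>0$. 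Therefore $f_2'(x)>0$ for all $x\geq r$, and so $f_2$ is monotonically increasing.

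The computation is essentially routine bookkeeping; the only step that genuinely uses a hypothesis is establishing the sign of the numerator $2x^{2}-rx+4$ in the middle term, which is precisely where the restriction $x\geq r$ is needed (without it this term's derivative contribution could in principle be negative). I expect that to be the single point worth spelling out, the differentiation and the regrouping being purely mechanical.
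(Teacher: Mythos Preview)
Your argument is correct and follows exactly the paper's approach: compute $f_2'(x)$ term by term and observe it is positive on $x\geq r$. In fact the paper simply writes down the same derivative expression and asserts $f_2'(x)>0$ without further comment, so your regrouping and the explicit use of $x\geq r$ (and $r\geq 0$) to control the sign of $2x^{2}-rx+4$ supply justification that the paper leaves implicit.
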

\begin{proof}
$f_{2}^{'}(x)=\frac{rx}{\sqrt{x^{2}+1}}+\sqrt{x^{2}+4}+\frac{x(x-r)}{\sqrt{x^{2}+4}}+\sqrt{(x-r)^{2}+4}+\frac{(x-r)^{2}}{\sqrt{(x-r)^{2}+4}}>0$. Thus $f_{2}(x)$ is a monotonically increasing function.
\end{proof}

\begin{lemma}\label{l-25}
Let $f_{3}(x)=x\sqrt{x^{2}+2^{2}}-(x-2)\sqrt{(x-2)^{2}+2^{2}}$, then $f_{3}(x)$ is a monotonically increasing function.
\end{lemma}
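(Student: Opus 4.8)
The plan is to differentiate $f_3$ and show the derivative is positive for all $x$ in the relevant range. Writing $g(x)=x\sqrt{x^2+4}$, we have $f_3(x)=g(x)-g(x-2)$, so it suffices to prove that $g$ is a strictly increasing function with strictly increasing... no, actually it suffices simply to show $g'(x)>g'(x-2)$, i.e. that $g'$ is itself increasing (that $g$ is convex on the interval in question). Compute $g'(x)=\sqrt{x^2+4}+\dfrac{x^2}{\sqrt{x^2+4}}=\dfrac{2x^2+4}{\sqrt{x^2+4}}$. Then $f_3'(x)=g'(x)-g'(x-2)=\dfrac{2x^2+4}{\sqrt{x^2+4}}-\dfrac{2(x-2)^2+4}{\sqrt{(x-2)^2+4}}$, and the claim reduces to showing this quantity is positive.

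First I would rewrite $g'(x)=\dfrac{2x^2+4}{\sqrt{x^2+4}}=2\sqrt{x^2+4}-\dfrac{4}{\sqrt{x^2+4}}$. Both terms $2\sqrt{x^2+4}$ and $-4/\sqrt{x^2+4}$ are increasing in $x^2$, hence increasing in $|x|$; so on the range where $x\ge x-2\ge 0$ (equivalently $x\ge 2$) we immediately get $g'(x)>g'(x-2)$ and thus $f_3'(x)>0$. To cover all $x$ one checks the remaining cases by comparing $x^2$ with $(x-2)^2$: since $g'$ depends on its argument only through the square, $g'(x)>g'(x-2)$ is equivalent to $x^2>(x-2)^2$, i.e. $4x-4>0$, i.e. $x>1$. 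Hence $f_3'(x)>0$ for $x>1$, which contains the intended domain (degrees in a cactus are at least... the function is applied with $x\ge 3$ in the main proof). Alternatively, and more cleanly, I would just square: $f_3'(x)>0 \iff (2x^2+4)^2\big((x-2)^2+4\big) > \big(2(x-2)^2+4\big)^2 (x^2+4)$, expand both sides as polynomials in $x$, and verify the difference is a polynomial with positive coefficients (or factors as $(x-1)$ times something positive) on the relevant range.

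The only mild obstacle is bookkeeping: deciding the precise interval on which monotonicity is asserted and keeping the algebra of the squared inequality under control. Since the paper's other lemmas (\ref{l-23}, \ref{l-24}) are proved by the same derivative-sign technique and the monotonicity here is really just the convexity of $x\mapsto x\sqrt{x^2+4}$, I expect the cleanest write-up to be the two-line computation $f_3'(x)=g'(x)-g'(x-2)>0$ together with the observation that $g'(x)=2\sqrt{x^2+4}-4/\sqrt{x^2+4}$ is increasing in $|x|$, so no genuine difficulty remains.
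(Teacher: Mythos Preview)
Your argument is correct and is essentially the paper's own proof: the paper simply writes
\[
f_3'(x)=\sqrt{x^{2}+4}-\sqrt{(x-2)^{2}+4}+\frac{x^{2}}{\sqrt{x^{2}+4}}-\frac{(x-2)^{2}}{\sqrt{(x-2)^{2}+4}}>0,
\]
which is exactly your $g'(x)-g'(x-2)$ split into two pieces. If anything you are more careful than the paper, which asserts positivity without mentioning the implicit restriction $x>1$ (equivalently $x^{2}>(x-2)^{2}$) that you correctly isolate; since the lemma is only ever invoked with $x=d\ge 3$, this suffices.
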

\begin{proof}
$f_{3}^{'}(x)=\sqrt{x^{2}+2^{2}}-\sqrt{(x-2)^{2}+2^{2}}+\frac{x^{2}}{\sqrt{x^{2}+2^{2}}}-\frac{(x-2)^{2}}{\sqrt{(x-2)^{2}+2^{2}}}>0$. Thus $f_{3}(x)$ is a monotonically increasing function.
\end{proof}

\begin{lemma}\label{l-26}\cite{chli2021,zhou2021}
Let $G\in \mathcal{C}(2\beta,0)$, then $SO(G)\leq \sqrt{5}(\beta-1)+\sqrt{\beta^{2}+1}+(\beta-1)\sqrt{\beta^{2}+4}$, with equality iff $G\cong H^{*}(2\beta,0)$.
\end{lemma}

\begin{lemma}\label{l-27}\cite{zhou2021}
Let $G\in \mathcal{C}(2\beta,1)$, then $SO(G)\leq \beta\sqrt{(\beta+1)^{2}+4}+\sqrt{(\beta+1)^{2}+1}+\sqrt{5}(\beta-1)+2\sqrt{2}$, with equality iff $G\cong H^{*}(2\beta,1)$.
\end{lemma}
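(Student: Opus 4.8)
\emph{Proof proposal.} Since $G$ has exactly one cycle it is unicyclic --- a cycle $C$ together with trees rooted at its vertices --- and since $G$ has a perfect matching its order is even. The extremal graph $H^{*}(2\beta,1)$ is the ``triangle cherry-spider'': a triangle $vab$, with a single pendant vertex $v'$ attached to $v$ and all remaining vertices attached to $v$ as pendant paths of length two (cherries); here $v$ is the unique vertex of maximum degree and the perfect matching is $\{ab,\,vv'\}$ together with one interior edge of each cherry. The plan is to show that any $SO$-maximal $G\in\mathcal C(2\beta,1)$ is isomorphic to $H^{*}(2\beta,1)$ by a chain of local transformations, each preserving the order, the number of cycles, and the existence of a perfect matching, and each strictly increasing $SO$ unless $G$ already has the required local shape.

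First I would \emph{simplify the attached trees}. Working from the leaves inward, any pendant path of length $\ge3$ and any branching inside an attached tree can be traded for pendant edges and cherries hung directly on a cycle vertex, which raises the degree of the anchoring vertex; since every vertex must stay matched, one reads off from an $M$-alternating path which edge to re-route, and the resulting change in $SO$ is nonnegative by the monotonicity of $x\mapsto\sqrt{x^{2}+c^{2}}$ together with elementary convexity estimates of exactly the kind recorded in Lemmas~\ref{l-23}--\ref{l-25}. Afterwards every attached tree is a pendant edge or a cherry, all non-cycle vertices have degree $\le2$, and --- reading the matching condition vertex by vertex --- each cycle vertex carries at most one pendant edge. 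Next I would \emph{concentrate all attachments at one vertex}: let $v$ be a cycle vertex of maximum degree; if another cycle vertex $u$ still carries a cherry or pendant edge, detach it and re-attach it to $v$. This keeps a perfect matching (the block's internal matching edge is untouched) and, since $d_{v}\ge d_{u}$ throughout, strictly increases $SO$ by Lemma~\ref{l-23} applied edge by edge at $v$ and at $u$, together with a direct check of the two cycle edges at $u$.

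It remains to \emph{shorten the cycle to a triangle}. Now every cycle vertex other than $v$ has degree $2$. A short parity argument (depending on whether $v$ is matched along $C$ or into a pendant) shows that if $|C|=\ell\ge5$ then some cycle-neighbour $u$ of $v$ lies in a matching edge $uu'$ with $u'$ also on $C$; deleting the cycle edge at $u'$ that points away from $u$ and adding the edge from $v$ to its other endpoint turns $v\!-\!u\!-\!u'$ into a pendant cherry at $v$ and replaces $C$ by a cycle of length $\ell-2$, without disturbing $M$ (both the deleted and the added edge avoid $M$) and with $SO$ strictly increased by the same monotonicity argument. Iterating leaves $\ell\in\{3,4\}$; the case $\ell=4$ is cleared by one edge-swap turning $C_{4}$ into a triangle with one extra pendant, followed by a last application of the concentration step. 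At the end $G$ is a triangle one of whose vertices carries all the pendant edges and cherries, and the matching condition then forces this load to be exactly one pendant edge together with cherries, i.e.\ $G\cong H^{*}(2\beta,1)$. Evaluating $SO(H^{*}(2\beta,1))$ edge-type by edge-type yields the stated value, and since every transformation strictly increased $SO$ whenever $G$ was not yet in this shape, $H^{*}(2\beta,1)$ is the unique maximiser.

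The main obstacle is the interplay between these transformations and the perfect matching: a move that is plainly good for $SO$ can destroy $M$, so each step must be set up to touch only non-matching edges or to come with an explicit $M$-alternating repair, and it is the parity of $\ell$ (governed by how $v$ is matched) that forces the cycle to be shrunk two steps at a time and makes $\ell=4$ a genuinely separate case. The remaining technical points --- making the ``$SO$ does not decrease'' inequalities uniform in the vertex degrees, and disposing of the smallest values of $\beta$ where some transformations are vacuous or the cycle is already a triangle --- are routine given the convexity lemmas.
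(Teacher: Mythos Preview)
This lemma is quoted from \cite{zhou2021} and is not proved in the present paper at all; it is simply invoked as a known base case in the inductions of Section~4. So there is no ``paper's own proof'' to compare your proposal against.

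That said, your outline is the standard transformation strategy (reduce pendant trees to pendants/cherries, concentrate them at a single cycle vertex, then shorten the cycle) and is the approach one would expect in \cite{zhou2021}. A few remarks. First, your description of $H^{*}(2\beta,1)$ gives the centre degree $\beta+1$, and the resulting Sombor value is
\[
\beta\sqrt{(\beta+1)^{2}+4}+\sqrt{(\beta+1)^{2}+1}+(\beta-2)\sqrt{5}+2\sqrt{2},
\]
which equals the paper's $\Phi(\beta,1)$ but \emph{not} the expression printed in the lemma (the printed bound has a centre of degree $2\beta+1$, which is impossible on $2\beta$ vertices, so the lemma as stated contains a typo). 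You should flag this rather than claim your computation ``yields the stated value''. Second, in your concentration step, moving a lone pendant edge from $u$ to $v$ can break the perfect matching at $u$ (the pendant was $u$'s matching partner); you gesture at $M$-alternating repairs elsewhere but not here, and this is exactly the kind of case that needs an explicit fix. Third, the $\ell=4$ case deserves more care: with a perfect matching and all attachments already at $v$, a $4$-cycle with three degree-$2$ vertices forces $v$ to be matched along the cycle, and your ``edge-swap to a triangle plus pendant'' must be checked to preserve a perfect matching on the whole graph, not just locally.
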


\begin{lemma}\label{l-28}
Let $f(x)=(x-1)\sqrt{x^{2}+2^{2}}+\sqrt{x^{2}+1^{2}}$ and $g(x)=f(x)-f(x+1)$, then $g(x)$ $(x\geq 1)$ is a monotonically decreasing function.
\end{lemma}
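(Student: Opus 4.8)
The plan is to reduce the monotonicity of $g$ to a sign condition on its derivative. Since $g(x)=f(x)-f(x+1)$, we have $g'(x)=f'(x)-f'(x+1)$, so it suffices to compute $f'$ once and compare its values at $x$ and at $x+1$. Differentiating the two summands of $f$ and simplifying gives
\begin{equation*}
f'(x)=\frac{2x^{2}-x+4}{\sqrt{x^{2}+4}}+\frac{x}{\sqrt{x^{2}+1}}
=2\sqrt{x^{2}+4}-\frac{x+4}{\sqrt{x^{2}+4}}+\frac{x}{\sqrt{x^{2}+1}}.
\end{equation*}
Substituting this at $x$ and at $x+1$ expresses $g'(x)$ as a sum of three differences, each of the shape $\phi(x)-\phi(x+1)$, namely $2\bigl[\sqrt{x^{2}+4}-\sqrt{(x+1)^{2}+4}\bigr]$, then $-\bigl[\tfrac{x+4}{\sqrt{x^{2}+4}}-\tfrac{x+5}{\sqrt{(x+1)^{2}+4}}\bigr]$, and finally $\bigl[\tfrac{x}{\sqrt{x^{2}+1}}-\tfrac{x+1}{\sqrt{(x+1)^{2}+1}}\bigr]$. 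The aim is to show that their sum satisfies $g'(x)>0$ on the relevant range $x\ge 1$, which gives that $g$ is increasing exactly as stated.

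Next I would sign each of the three pieces individually. The first and third are governed by the elementary monotonicity of $t\mapsto\sqrt{t^{2}+c}$ and of $t\mapsto t/\sqrt{t^{2}+1}$ (the same square-root comparison mechanism that underlies Lemma~\ref{l-23}), so each has a definite sign on $x\ge 1$. The middle piece involves $t\mapsto\tfrac{t+c}{\sqrt{t^{2}+4}}$, whose derivative $\tfrac{4-ct}{(t^{2}+4)^{3/2}}$ is of fixed sign once $t\ge 1$, so this difference is controlled in the same elementary way. The point of this step is not any single differentiation — all three are routine — but to record precise magnitudes, since the pieces do not all point in the same direction and must be weighed against one another.

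The main obstacle is the combination step: because the three contributions partly cancel, the net sign of $g'(x)$ is decided by a delicate competition among them, and certifying $g'(x)>0$ for every $x\ge 1$ is where the real work lies. The concrete route I would follow is to place all three differences over the common denominator $\sqrt{x^{2}+4}\,\sqrt{(x+1)^{2}+4}\,\sqrt{x^{2}+1}\,\sqrt{(x+1)^{2}+1}$, rationalize so that the surds collect into a single polynomial numerator, and then verify the needed inequality for the numerator on $x\ge 1$ (isolating the dominant contribution and bounding the lower-order terms). Once this polynomial inequality is established, $g'(x)$ has constant sign and the asserted monotonicity of $g(x)=f(x)-f(x+1)$ follows immediately.
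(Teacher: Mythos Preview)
Your plan cannot succeed as written, because the inequality you are aiming for points the wrong way: on $x\ge 1$ one actually has $g'(x)<0$, not $g'(x)>0$. A quick check already gives $g(1)\approx-3.65$, $g(2)\approx-5.31$, $g(3)\approx-7.17$, so $g$ is decreasing. The lemma's wording ``increasing'' is a slip; the paper's own argument computes $f''(x)$ and obtains $f''(x)>0$, which means $f'$ is increasing and hence $g'(x)=f'(x)-f'(x+1)<0$. Every later application of this lemma in the paper (Lemmas~\ref{l-41}, \ref{l-42}, \ref{l-43}) uses it in the form $g(\beta+t-1)-g(d-1)\le 0$ with $d-1\le\beta+t-1$, i.e., requires $g$ to be \emph{decreasing}. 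So the polynomial inequality you anticipate in your final step would come out with the opposite sign, and the ``delicate competition'' you describe would resolve against you.

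Independently of the sign issue, the paper's route is considerably shorter than the one you outline. Rather than comparing $f'(x)$ with $f'(x+1)$ term by term, placing everything over a four-factor common denominator, rationalizing, and then wrestling with a polynomial numerator, the paper simply checks $f''>0$ and lets the mean value theorem do the comparison. Concretely,
\[
f''(x)=\frac{x}{\sqrt{x^{2}+4}}+\frac{2x-1}{\sqrt{x^{2}+4}}-\frac{x^{2}(x-1)}{(x^{2}+4)^{3/2}}+\frac{1}{\sqrt{x^{2}+1}}-\frac{x^{2}}{(x^{2}+1)^{3/2}},
\]
and for $x\ge 1$ the last two terms combine to $\dfrac{1}{(x^{2}+1)^{3/2}}>0$, the middle two combine to $\dfrac{x^{3}+8x-4}{(x^{2}+4)^{3/2}}>0$, and the first is positive; no cancellation analysis is needed. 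If you want to salvage your direct approach, just flip the target to $g'(x)<0$; but the second-derivative shortcut avoids the heavy algebra entirely.
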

\begin{proof}
$f^{'}(x)=\sqrt{x^{2}+2^{2}}+\frac{x(x-1)}{\sqrt{x^{2}+2^{2}}}+\frac{x}{\sqrt{x^{2}+1^{2}}}$;
$f^{''}(x)=\frac{x}{\sqrt{x^{2}+2^{2}}}+ \frac{2x-1}{\sqrt{x^{2}+2^{2}}}-\frac{x^{2}(x-1)}{(x^{2}+2^{2})^{\frac{3}{2}}}+ \frac{1}{\sqrt{x^{2}+1^{2}}}- \frac{x^{2}}{(x^{2}+1^{2})^{\frac{3}{2}}}=\frac{2x(x^{2}+6)-4}{(x^{2}+4)^{\frac{3}{2}}}+\frac{1}{(x^{2}+1)^{\frac{3}{2}}}>0 $ for $x\geq 1$.
Thus $g(x)$ $(x\geq 1)$ is a monotonically decreasing function.
\end{proof}

\ \notag\

\section{Maximum Sombor index among cacti $\mathcal{H}(n,t)$}

\begin{theorem}\label{t-31}
Let $G\in \mathcal{H}(n,t)$ $($$n\geq 5$$)$, then
$$SO(G)\leq (n-2t-1)\sqrt{(n-1)^{2}+1}+2t\sqrt{(n-1)^{2}+4}+2\sqrt{2}t,$$
with equality iff $G\cong H(n,t)$.
\end{theorem}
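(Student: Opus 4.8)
The plan is to show that an arbitrary cactus $G\in\mathcal H(n,t)$ can be turned into $H(n,t)$ by a finite chain of local transformations, none of which decreases $SO$ and each of which strictly increases it unless the affected piece of $G$ already coincides with the corresponding piece of $H(n,t)$; it then follows that $H(n,t)$ is the unique maximizer. Equivalently, I would fix a cactus $G^{*}$ attaining $\max SO$ over $\mathcal H(n,t)$ and show that every structural feature of $H(n,t)$ is already present in $G^{*}$.

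\emph{Concentrating everything at one vertex.} Let $u$ be a vertex of maximum degree $\Delta$ in $G^{*}$; the key claim is $\Delta=n-1$. I would first exclude long cycles: if some block is a cycle $C$ of length $\ge 4$, delete an edge of $C$ not incident to a largest-degree vertex $v$ of $C$ and reattach the resulting pendant path at $v$. This keeps the graph in $\mathcal H(n,t)$, and, weighing the edges at $v$ whose $v$-endpoint degree rises against the edges at the detached vertices whose degree drops, Lemma~\ref{l-23} (the decreasing function $f_{1}$) shows $SO$ strictly increases. So one may assume every cycle is a triangle. Next, if $u$ is not adjacent to some vertex $w$, take the edge $zw$ with $z$ the neighbour of $w$ on a shortest path from $u$ to $w$, and transplant the branch of $G^{*}$ hanging at $z$ on the $w$-side so that its attachment vertex becomes $u$; again Lemma~\ref{l-23} shows the gain at $u$ dominates the loss at $z$ because $d_{u}\ge d_{z}$, and Lemmas~\ref{l-24} and~\ref{l-25} (the functions $f_{2},f_{3}$) supply the same comparison in the cases where the relocated branch is a whole triangle, so that a degree changes by $2$. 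Iterating these moves forces $\Delta=n-1$.

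\emph{Identifying the graph and computing the value.} Once some vertex $u$ has degree $n-1$, the cactus condition forces every block to be either an edge $uv$ or a triangle $uxy$: a cycle through two neighbours $x,y$ of $u$ gives, together with $u$, the triangle $uxy$, and a second cycle through $x$ and $y$ would share the edge $xy$ with it, which a cactus forbids. Hence $G^{*}$ is $u$ together with exactly $t$ triangles at $u$ and $n-2t-1$ pendant edges at $u$, i.e. $G^{*}\cong H(n,t)$, and running the same moves on a general $G$ gives $SO(G)\le SO(H(n,t))$ with equality only for $H(n,t)$. The value is then immediate: the $n-2t-1$ pendant edges each contribute $\sqrt{(n-1)^{2}+1}$, the $2t$ edges from $u$ to triangle vertices each contribute $\sqrt{(n-1)^{2}+2^{2}}$, and the $t$ edges joining paired triangle vertices each contribute $\sqrt{2^{2}+2^{2}}=2\sqrt2$, which sums to the stated bound. (Alternatively this can be framed as an induction on $t$ with base cases $t=0$ and $t=1$ furnished by Lemmas~\ref{l-21} and~\ref{l-22}, the inductive step deleting a pendant triangle at the dominant vertex and re-adding it, with Lemma~\ref{l-25} absorbing the degree-$2$ drop.)

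The hard part is the concentration step. The transformations themselves are simple, but the bookkeeping is not: the endpoint $z$ of a relocated branch, or a vertex of a cycle being shortened, can itself carry further pendant subtrees, so the move must transplant entire subtrees and one must then check that the \emph{net} change over all perturbed edges is positive and is zero only when the move is vacuous. Lemmas~\ref{l-23}--\ref{l-25} are precisely the tool for this ($f_{1}$ for the generic case where one degree drops by $r$ while $d_{u}$ rises, $f_{2}$ and $f_{3}$ for the $r=2$ cases arising from moving a triangle), but verifying that these lemmas cover every configuration that can occur in a cactus is where the real work lies.
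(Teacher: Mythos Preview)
Your transformation strategy is different from the paper's, which runs an induction on $n+t$: it strips off the pendant vertices at a support vertex (Case~1) or removes/shortens a terminal cycle (Case~2), applies the bound to the smaller cactus, and controls the explicit difference $SO(G)-Q(n,t)$ via Lemmas~\ref{l-23}--\ref{l-25}. Those lemmas are calibrated to that bookkeeping: they show that the residual expression, viewed as a function of the support-vertex degree $d$, is monotone, so one may replace $d$ by $n-1$ and collapse the estimate. In principle your extremal-plus-moves route can also reach the result, but as written the concentration step has a genuine gap.

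You assert that when a branch is transplanted from $z$ to the maximum-degree vertex $u$, ``Lemma~\ref{l-23} shows the gain at $u$ dominates the loss at $z$ because $d_u\ge d_z$.'' But Lemma~\ref{l-23} only says that $x\mapsto\sqrt{x^{2}+d^{2}}-\sqrt{x^{2}+(d-r)^{2}}$ is decreasing in the \emph{neighbour} degree $x$; it does not compare a degree increase at $u$ with a degree decrease at $z$. What the transplant actually requires is that
\[
\sum_{y\sim u}\Bigl(\sqrt{(d_u{+}1)^{2}+d_y^{2}}-\sqrt{d_u^{2}+d_y^{2}}\Bigr)
\;+\;\sqrt{(d_u{+}1)^{2}+d_w^{2}}
\;>\;
\sum_{\substack{x\sim z\\x\ne w}}\Bigl(\sqrt{d_z^{2}+d_x^{2}}-\sqrt{(d_z{-}1)^{2}+d_x^{2}}\Bigr)
\;+\;\sqrt{d_z^{2}+d_w^{2}},
\]
with $d_u\ge d_z$ but otherwise unrelated neighbour degrees on the two sides. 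That is a Kelmans-type monotonicity statement; it is true, but it needs convexity of $a\mapsto\sqrt{a^{2}+b^{2}}$ in $a$ together with a term-pairing argument, and none of Lemmas~\ref{l-23}--\ref{l-25} delivers it (Lemmas~\ref{l-24} and~\ref{l-25} concern the specific aggregates arising in the paper's induction, not edge transplants). The same issue hits your cycle-shortening move, whose description (``delete an edge of $C$ not incident to $v$ and reattach the resulting pendant path at $v$'') does not, as stated, keep the cycle count fixed; once you write the intended move precisely (say delete $v_iv_{i+1}$ and add $v_1v_{i+1}$), again $d_{v_1}$ rises and $d_{v_i}$ falls, and the sign of $\Delta SO$ must be proved rather than cited. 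You flag this yourself at the end, and it is exactly where the proof is missing.
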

\begin{proof}
For convenience, we denote $Q(n,t)\triangleq (n-2t-1)\sqrt{(n-1)^{2}+1}+2t\sqrt{(n-1)^{2}+4}+2\sqrt{2}t$.
In the following we make inductive assumptions about $n+t$.

By Lemma \ref{l-21} and \ref{l-22}, the conclusion holds if $t=1$ or $t=2$.
If $n=5$, the conclusion holds clearly. So we only consider $n\geq 6$ and $t\geq 2$ in the following.
$PV(G)$ denotes the set of pendant vertices in $G$. We call the vertices connected with pendant vertices support vertices, denoted by $Supp(G)$.

\noindent {\bf Case 1}. $PV\neq \emptyset$.

Let $v$ be a pendant vertex, $N(v)=w$, $N(w)=\{v,x_{1},x_{2},\cdots,x_{d-1}\}$. $d_{x_{i}}=1$ for $1\leq i\leq r-1$, $d_{x_{i}}\geq 2$ for $r\leq i\leq d-1$.

Let $G^{*}=G-v-x_{1}-x_{2}-\cdots -x_{r-1}$, then $G^{*}\in \mathcal{H}(n-r,t)$. By Lemma \ref{l-23}, \ref{l-24} and \ref{l-25}, we have

\begin{equation}
\begin{split}
 SO(G)&=SO(G^{*})+r\sqrt{d^{2}+1^{2}}+\sum_{i=r}^{d-1}(\sqrt{d^{2}+d_{x_{i}}^{2}}-\sqrt{(d-r)^{2}+d_{x_{i}}^{2}})\\
&\leq  Q(n-r,t)+r\sqrt{d^{2}+1^{2}}+\sum_{i=r}^{d-1}(\sqrt{d^{2}+d_{x_{i}}^{2}}-\sqrt{(d-r)^{2}+d_{x_{i}}^{2}})\\
&\leq  Q(n,t)+2t\sqrt{(n-r-1)^{2}+2^{2}}-2t\sqrt{(n-1)^{2}+2^{2}}+(n-r-2t-1)\sqrt{(n-r-1)^{2}+1^{2}} \\
&\quad -(n-2t-1)\sqrt{(n-1)^{2}+1^{2}}+r\sqrt{d^{2}+1^{2}}+(d-r)(\sqrt{d^{2}+2^{2}}-\sqrt{(d-r)^{2}+2^{2}}) \\
&\leq  Q(n,t)+2t\sqrt{(n-r-1)^{2}+2^{2}}-2t\sqrt{(n-1)^{2}+2^{2}}+(n-r-2t-1)\sqrt{(n-r-1)^{2}+1^{2}} \\
&\quad -(n-2t-1)\sqrt{(n-1)^{2}+1^{2}}+r\sqrt{(n-1)^{2}+1^{2}}\\
&\quad +(n-r-1)(\sqrt{(n-1)^{2}+2^{2}}-\sqrt{(n-r-1)^{2}+2^{2}}) \\
&= Q(n,t)+ (n-2t-r-1)[(\sqrt{(n-1)^{2}+2^{2}}-\sqrt{(n-r-1)^{2}+2^{2}})\\
&\quad -(\sqrt{(n-1)^{2}+1^{2}}-\sqrt{(n-r-1)^{2}+1^{2}})]\\
&\leq Q(n,t),  \nonumber
\end{split}
\end{equation}
with equality iff $G\cong H(n,t)$.

\noindent {\bf Case 2}. $PV= \emptyset$.

Suppose that there exists vertices $v_{0}$, $v_{1}$, $v_{2}$ on a cycle of $G$, $v_{0}v_{1}, v_{1}v_{2}\in E(G)$, $d(v_{1})=d(v_{2})=2$, $d(v_{0})\triangleq d\geq 3$. In the following, we classify the circle lengths.

\noindent {\bf Subcase 2.1}. The circle length greater than or equal to 4.

Let $G^{*}=G-v_{1}+v_{0}v_{2}$, then $G^{*}\in \mathcal{H}(n-1,t)$.
\begin{equation}
\begin{split}
 SO(G)&=SO(G^{*})+\sqrt{d^{2}+2^{2}}+\sqrt{2^{2}+2^{2}}-\sqrt{d^{2}+2^{2}}\\
&\leq  Q(n-1,t)+2\sqrt{2}\\
&=  Q(n,t)+2t\sqrt{(n-2)^{2}+2^{2}}-2t\sqrt{(n-1)^{2}+2^{2}}+(n-2t-2)\sqrt{(n-2)^{2}+1^{2}} \\
&\quad -(n-2t-1)\sqrt{(n-1)^{2}+1^{2}}+2\sqrt{2} \\
&=  Q(n,t)+2t[\sqrt{(n-2)^{2}+2^{2}}-\sqrt{(n-1)^{2}+2^{2}}]+(n-2t-2)[\sqrt{(n-2)^{2}+1^{2}} \\
&\quad -\sqrt{(n-1)^{2}+1^{2}}]-\sqrt{(n-1)^{2}+1^{2}}+2\sqrt{2} \\
&< Q(n,t).  \nonumber
\end{split}
\end{equation}

\noindent {\bf Subcase 2.2}. The circle length equals 3.

Let $G^{*}=G-v_{1}-v_{2}$, then $G^{*}\in \mathcal{H}(n-2,t-1)$. By Lemma \ref{l-23}, \ref{l-25}, we have
\begin{equation}
\begin{split}
 SO(G)&=SO(G^{*})+2\sqrt{d^{2}+2^{2}}+\sqrt{2^{2}+2^{2}}+\sum_{i=1}^{d-2}(\sqrt{d^{2}+d_{x_{i}}^{2}}-\sqrt{(d-2)^{2}+d_{x_{i}}^{2}})\\
&\leq  Q(n-2,t-1)+2\sqrt{d^{2}+2^{2}}+\sqrt{2^{2}+2^{2}}+(d-2)(\sqrt{d^{2}+2^{2}}-\sqrt{(d-2)^{2}+2^{2}})\\
&=  Q(n,t)+(2t-2)\sqrt{(n-3)^{2}+2^{2}}-2t\sqrt{(n-1)^{2}+2^{2}}+(n-2t-1)\sqrt{(n-3)^{2}+1^{2}} \\
&\quad -(n-2t-1)\sqrt{(n-1)^{2}+1^{2}}+2\sqrt{d^{2}+2^{2}}+(d-2)(\sqrt{d^{2}+2^{2}}-\sqrt{(d-2)^{2}+2^{2}}) \\
&\leq  Q(n,t)+(2t-2)\sqrt{(n-3)^{2}+2^{2}}-2t\sqrt{(n-1)^{2}+2^{2}}+(n-2t-1)\sqrt{(n-3)^{2}+1^{2}} \\
&\quad -(n-2t-1)\sqrt{(n-1)^{2}+1^{2}}+(n-1)\sqrt{(n-1)^{2}+2^{2}}-(n-3)\sqrt{(n-3)^{2}+2^{2}} \\
&=  Q(n,t)+(n-2t-1)[(\sqrt{(n-1)^{2}+2^{2}}-\sqrt{(n-3)^{2}+2^{2}})\\
&\quad -(\sqrt{(n-1)^{2}+1^{2}}-\sqrt{(n-3)^{2}+1^{2}})] \\
&\leq Q(n,t),  \nonumber
\end{split}
\end{equation}
with equality iff $G\cong H(n,t)$.

This completes the proof.
\end{proof}

Using a similar way, for the reduced Sombor index, we also have similar result. We
omit the proof.
\begin{theorem}\label{t-32}
Let $G\in \mathcal{H}(n,t)$ $($$n\geq 5$$)$, then
$$SO_{red}(G)\leq (n-2t-1)(n-2)+2t\sqrt{(n-2)^{2}+1}+\sqrt{2}t,$$
with equality iff $G\cong H(n,t)$.
\end{theorem}

\section{Maximum Sombor index among cacti $\mathcal{C}(2\beta,t)$}

For convenience, we denote
$\Phi(\beta,t)\triangleq SO(H^{*}(2\beta,t))=(\beta+t-1)\sqrt{(\beta+t)^{2}+4}+\sqrt{(\beta+t)^{2}+1}+\sqrt{5}(\beta-t-1)+2\sqrt{2}t$.
Note that the definition of function $g(x)$ and $f(x)$ has introduced in Lemma \ref{l-28}.
Suppose $\mathcal{M}$ is a perfect matching of $G\in \mathcal{C}(2\beta,t)$.

In the following Lemma \ref{l-41}, Lemma \ref{l-42} and Lemma \ref{l-43}, we make inductive assumptions about $\beta+t$. By Lemma \ref{l-26} and \ref{l-27}, these conclusions hold if $t=1$ or $t=0$.
If $\beta=3, t=2$, the conclusion holds clearly. So we only consider $\beta\geq 4$, $t\geq 2$ in the following.
\begin{lemma}\label{l-41}
Let $G\in \mathcal{C}(2\beta,t)$ $($$\beta\geq 2$$)$, $\delta(G)\geq 2$, then
$SO(G)< \Phi(\beta,t)$.
\end{lemma}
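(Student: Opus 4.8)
The plan is to argue by induction on $\beta+t$, mirroring the strategy of Theorem \ref{t-31} but now working inside the class $\mathcal{C}(2\beta,t)$ of cacti carrying a perfect matching $\mathcal{M}$. The base cases $t=0$ and $t=1$ are supplied by Lemmas \ref{l-26} and \ref{l-27}, which already identify $H^{*}(2\beta,0)$ and $H^{*}(2\beta,1)$ as the unique maximizers and in particular give $SO(G)\le\Phi(\beta,t)$ there; one also checks the smallest admissible $\beta$ (namely $\beta=2$) directly. So assume $t\ge 2$, $\beta\ge 3$, and that the bound holds for all smaller values of $\beta+t$. The hypothesis $\delta(G)\ge 2$ means $G$ has no pendant vertices, so every vertex lies on a cycle or on a path joining cycles; since $\mathcal{M}$ is perfect, $2\beta$ is even and the structure is fairly rigid.

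The heart of the argument is to locate a suitable short configuration on a cycle and contract it, producing a smaller member of $\mathcal{C}(2\beta',t')$ to which the inductive hypothesis applies. Concretely, pick a cycle $C$ of $G$ and look for consecutive vertices $v_0,v_1,v_2$ on $C$ with $d(v_1)=d(v_2)=2$ and $d(v_0)=d\ge 3$ (such a $v_0$ of large degree exists because $G$ is a cactus with several cycles and no pendants, so not all vertices can have degree $2$ unless $G$ is a single cycle). Because $\mathcal{M}$ is a perfect matching and $d(v_1)=d(v_2)=2$, the edge $v_1v_2$ must belong to $\mathcal{M}$ (neither $v_1$ nor $v_2$ can be matched along the unique other incident edge without forcing a pendant-type obstruction; this is the place to spell out the matching bookkeeping carefully). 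One then deletes $v_1$ and $v_2$: if $C$ is a triangle this merges into $G^{*}=G-v_1-v_2\in\mathcal{C}(2\beta-2,t-1)$, and if $|C|\ge 4$ one deletes the matched pair $v_1,v_2$ and reconnects to keep a cycle, landing in $\mathcal{C}(2\beta-2,t)$; in both cases $G^{*}$ still has a perfect matching, namely $\mathcal{M}\setminus\{v_1v_2\}$.

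With $G^{*}$ in hand, one writes $SO(G)=SO(G^{*})+(\text{edge contributions at }v_1,v_2)+\sum(\sqrt{d^2+d_{x_i}^2}-\sqrt{(d-2)^2+d_{x_i}^2})$ for the neighbors $x_i$ of $v_0$ whose degree is unchanged, applies the inductive bound $SO(G^{*})\le\Phi(\beta-1,t')$, and then bounds the remaining terms using the monotonicity Lemmas \ref{l-23}, \ref{l-25}, and \ref{l-28}: Lemma \ref{l-23} controls the differences $\sqrt{d^2+d_{x_i}^2}-\sqrt{(d-2)^2+d_{x_i}^2}$ by their values at the extreme $d_{x_i}$, Lemma \ref{l-25} handles the telescoping $d\sqrt{d^2+4}-(d-2)\sqrt{(d-2)^2+4}$ type terms after one pushes $d$ up to its maximum possible value $\beta+t$, and Lemma \ref{l-28} is exactly tailored to compare $\Phi(\beta-1,t)$ or $\Phi(\beta-1,t-1)$ against $\Phi(\beta,t)$ since $f(x)=(x-1)\sqrt{x^2+4}+\sqrt{x^2+1}$ is the per-cycle signature appearing in $\Phi$. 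Chaining these inequalities should collapse the surplus to something strictly negative, yielding $SO(G)<\Phi(\beta,t)$.

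The main obstacle I anticipate is twofold. First, the combinatorial step: guaranteeing the existence of the configuration $v_0,v_1,v_2$ with the matched edge $v_1v_2$ requires ruling out the cases where every cycle is a triangle whose two degree-$2$ vertices are matched to vertices \emph{outside} the triangle, or where the cactus is a chain of triangles through a single high-degree hub; these degenerate shapes must be either excluded by the $\delta\ge 2$ and perfect-matching constraints or handled as separate small cases. Second, the analytic step: after substituting the extremal degree $d=\beta+t$ and the inductive value of $\Phi$, the resulting scalar inequality in $\beta$ and $t$ is not obviously sign-definite term by term, and getting it to close will likely require grouping it into a difference of the form $(\text{something})\cdot[g(\beta+t-1)-g(\beta+t)]$ with $g$ increasing (via Lemma \ref{l-28}) plus a manifestly negative leftover like $-\sqrt{5}$ or $-\sqrt{(\beta+t)^2+1}$, exactly as in the Subcase 2.2 computation of Theorem \ref{t-31}. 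I expect the bulk of the write-up to be this bookkeeping, with the inductive skeleton itself being routine.
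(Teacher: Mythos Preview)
Your inductive skeleton matches the paper's, but your reduction step is different and needlessly harder. The paper does \emph{not} delete a matched pair of vertices; it simply deletes the single edge $v_1v_2$ incident to the high-degree vertex on an end cycle, so that $G^{*}=G-v_1v_2\in\mathcal{C}(2\beta,t-1)$ (same $\beta$, one fewer cycle). The perfect matching survives this edge deletion after at most a shift around the cycle, and the bookkeeping collapses to a single chain of inequalities using Lemmas~\ref{l-23} and~\ref{l-28}; no cycle-length case split is needed, and there is no need to locate a matched pair of degree-$2$ vertices.

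Your vertex-deletion route can be made to work, but two points in the write-up are currently wrong. First, your justification that $v_1v_2\in\mathcal{M}$ (``pendant-type obstruction'') is not valid: since $\delta(G)\ge 2$ there is nothing preventing $v_1$ from being matched to $v_0$. What is true is that on an end cycle with a single cut vertex the degree-$2$ vertices form a path, and a parity argument on this path shows that \emph{some} adjacent degree-$2$ pair lies in $\mathcal{M}$ with one end adjacent to $v_0$; you must choose your triple accordingly rather than fix $v_0,v_1,v_2$ first and then assert the matching condition. Second, your case $|C|\ge 4$ breaks at $|C|=4$: deleting two adjacent vertices from a $4$-cycle and ``reconnecting to keep a cycle'' would create a multi-edge. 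In that situation the correct landing class is $\mathcal{C}(2\beta-2,t-1)$, not $\mathcal{C}(2\beta-2,t)$, and the estimate must be redone separately. Both issues are fixable, but they are exactly the complications the paper's edge-deletion sidesteps.
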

\begin{proof}
Suppose the cycle $C=v_{1}v_{2}\cdots v_{\lambda}v_{1}$ where $3\leq d_{v_{1}}\triangleq d\leq \beta+t$, $d_{v_{i}}=2$ for $i=2,3,\cdots,\lambda$.
$N(v_{1})=\{v_{2},v_{\lambda},x_{1},x_{2},\cdots,x_{d-2}\}$.
Let $G^{*}=G-v_{1}v_{2}$, then $G^{*}\in \mathcal{C}(2\beta,t-1)$, and $SO(G^{*})\leq \Phi(\beta,t-1)$.
By Lemma \ref{l-23} and \ref{l-28}, we have
\begin{equation}
\begin{split}
 SO(G)&=SO(G^{*})+\sqrt{d^{2}+2^{2}}+(\sqrt{2^{2}+2^{2}}-\sqrt{2^{2}+1^{2}})+\sum_{i=1}^{d-1}(\sqrt{d^{2}+d_{x_{i}}^{2}}-\sqrt{(d-1)^{2}+d_{x_{i}}^{2}})\\
&\leq  \Phi(\beta,t)+(\beta+t-2)\sqrt{(\beta+t-1)^{2}+2^{2}}+\sqrt{(\beta+t-1)^{2}+1^{2}} \\
&\quad -(\beta+t-1)\sqrt{(\beta+t)^{2}+2^{2}}-\sqrt{(\beta+t)^{2}+1^{2}}+t\sqrt{t^{2}+2^{2}}-(t-1)\sqrt{(t-1)^{2}+2^{2}} \\
&=  \Phi(\beta,t)+g(\beta+t-1)-g(d-1)+[(\sqrt{t^{2}+2^{2}}-\sqrt{(t-1)^{2}+2^{2}})\\
&\quad -(\sqrt{t^{2}+1^{2}}-\sqrt{(t-1)^{2}+1^{2}})] \\
&< \Phi(\beta,t),  \nonumber
\end{split}
\end{equation}
This completes the proof.
\end{proof}
\ \notag\

The support vertices are the vertices connected with pendant vertices.
\begin{lemma}\label{l-42}
Let $G\in \mathcal{C}(2\beta,t)$ $($$\beta\geq 2$$)$, $\delta(G)=1$ and there exists a support vertex $u$ with degree $2$ of the corresponding pendant vertex $v$, then
$SO(G)\leq \Phi(\beta,t)$,
with equality iff $G\cong H^{*}(2\beta,t)$.
\end{lemma}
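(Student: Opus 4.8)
The plan is to proceed, as in Lemma~\ref{l-41}, by induction on $\beta+t$, using the fact that deleting a pendant vertex and its degree-$2$ support vertex reduces both parameters in a controlled way. Since $u$ has degree $2$, write $N(u)=\{v,w\}$ where $v$ is the pendant vertex. Because $\mathcal{M}$ is a perfect matching, the edge $uv$ must lie in $\mathcal{M}$ (it is the only edge at $v$), so $w$ is matched to some vertex $w'\neq u$. Set $G^{*}=G-v-u$; then $G^{*}$ has $2(\beta-1)$ vertices, still has a perfect matching (namely $\mathcal{M}\setminus\{uv\}$), and the number of cycles is unchanged, so $G^{*}\in\mathcal{C}(2\beta-2,t)$. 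The degree of $w$ drops from $d_{w}\triangleq d$ to $d-1$, while all other degrees are preserved.

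Next I would write the exchange identity
\[
SO(G)=SO(G^{*})+\sqrt{d^{2}+1^{2}}+\sum_{z\in N(w)\setminus\{u\}}\Bigl(\sqrt{d^{2}+d_{z}^{2}}-\sqrt{(d-1)^{2}+d_{z}^{2}}\Bigr),
\]
the sum having $d-1$ terms. To bound $SO(G^{*})$ I apply the induction hypothesis; but here one must be careful, because $G^{*}$ need not satisfy the hypothesis of this lemma (it may have $\delta\geq 2$, or its support vertices may all have large degree). The clean way around this is to invoke the \emph{maximal} bound available in all cases: for any $G'\in\mathcal{C}(2\beta-2,t)$ with a perfect matching one has $SO(G')\leq\Phi(\beta-1,t)$ — this is exactly what Lemma~\ref{l-41} gives when $\delta\geq 2$, what the present Lemma~\ref{l-42} gives by induction when a degree-$2$ support vertex exists, and what a companion lemma (the next one in the paper, covering $\delta=1$ with all support vertices of degree $\geq 3$) will give otherwise; so in the write-up I would simply quote $SO(G^{*})\leq\Phi(\beta-1,t)$ as the inductive input, with equality iff $G^{*}\cong H^{*}(2\beta-2,t)$. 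Then, by Lemma~\ref{l-23}, each summand $\sqrt{d^{2}+d_{z}^{2}}-\sqrt{(d-1)^{2}+d_{z}^{2}}$ is maximized by taking $d_{z}$ as small as possible; since $\delta(G)=1$ but the neighbours of $w$ other than $u$ need not be pendant, the worst case to check is $d_{z}=1$, giving an upper estimate of $(d-1)\bigl(\sqrt{d^{2}+1}-\sqrt{(d-1)^{2}+1}\bigr)$, and finally $d\leq\beta+t$.

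Assembling these, $SO(G)$ is bounded above by
\[
\Phi(\beta-1,t)+\sqrt{d^{2}+1}+(d-1)\Bigl(\sqrt{d^{2}+1}-\sqrt{(d-1)^{2}+1}\Bigr)
=\Phi(\beta-1,t)+d\sqrt{d^{2}+1}-(d-1)\sqrt{(d-1)^{2}+1},
\]
and I would show this is at most $\Phi(\beta,t)$ by a monotonicity argument: the difference $\Phi(\beta,t)-\Phi(\beta-1,t)$ should be compared term-by-term with $d\sqrt{d^{2}+1}-(d-1)\sqrt{(d-1)^{2}+1}$ evaluated at $d=\beta+t$. Here the function $h(x)=x\sqrt{x^{2}+1}-(x-1)\sqrt{(x-1)^{2}+1}$ is increasing (a one-line derivative check, in the spirit of Lemmas~\ref{l-25} and~\ref{l-28}), so the bound is tightest at $d=\beta+t$, and a direct comparison of $h(\beta+t)$ with the increments of the three pieces of $\Phi$ (the $(\cdot)\sqrt{(\cdot)^{2}+4}$ term, the $\sqrt{(\cdot)^{2}+1}$ term, and the $\sqrt5(\cdot)$ term) closes the induction. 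Equality forces $d=\beta+t$, every neighbour of $w$ other than $u$ to be pendant, and $G^{*}\cong H^{*}(2\beta-2,t)$; unwinding these gives $G\cong H^{*}(2\beta,t)$.

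The main obstacle I anticipate is not any single inequality but the bookkeeping in the equality analysis: one must track simultaneously that $SO(G^{*})$ attains $\Phi(\beta-1,t)$ (hence $G^{*}=H^{*}(2\beta-2,t)$), that $d$ is exactly $\beta+t$, and that all the Lemma~\ref{l-23} estimates are tight (forcing the other neighbours of $w$ to be pendant), and then check that reassembling $v,u$ onto $w$ in $H^{*}(2\beta-2,t)$ really reproduces $H^{*}(2\beta,t)$ rather than some other extremal-looking cactus. A secondary subtlety is making sure the inductive base cases ($t\leq 1$, or $\beta$ small) are genuinely covered by Lemmas~\ref{l-26} and~\ref{l-27} together with the structure forced by having a degree-$2$ support vertex.
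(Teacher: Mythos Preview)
Your overall plan (delete $u,v$, induct on $\beta+t$, compare with $\Phi(\beta-1,t)$) matches the paper, but two concrete errors break the argument.

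First, the exchange identity is wrong. Removing $u$ and $v$ kills \emph{two} edges, $uv$ and $uw$, with contributions $\sqrt{2^{2}+1^{2}}=\sqrt{5}$ and $\sqrt{d^{2}+2^{2}}$ respectively (since $d_{u}=2$). The correct formula is
\[
SO(G)=SO(G^{*})+\sqrt{d^{2}+4}+\sqrt{5}+\sum_{z\in N(w)\setminus\{u\}}\Bigl(\sqrt{d^{2}+d_{z}^{2}}-\sqrt{(d-1)^{2}+d_{z}^{2}}\Bigr),
\]
not the single $\sqrt{d^{2}+1}$ term you wrote.

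Second, and more seriously, bounding every $d_{z}$ below by $1$ is too crude: with the correct identity, evaluating your upper estimate at $d=\beta+t$ gives
\[
\sqrt{(\beta+t)^{2}+4}+\sqrt{5}+(\beta+t-1)\bigl(\sqrt{(\beta+t)^{2}+1}-\sqrt{(\beta+t-1)^{2}+1}\bigr),
\]
and a direct comparison shows this \emph{exceeds} $\Phi(\beta,t)-\Phi(\beta-1,t)$ by the positive amount
\[
(\beta+t-2)\Bigl[\bigl(\sqrt{(\beta+t)^{2}+1}-\sqrt{(\beta+t-1)^{2}+1}\bigr)-\bigl(\sqrt{(\beta+t)^{2}+4}-\sqrt{(\beta+t-1)^{2}+4}\bigr)\Bigr]>0,
\]
so the induction does not close. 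Relatedly, your equality condition ``every neighbour of $w$ other than $u$ is pendant'' is impossible in a graph with a perfect matching once $d\geq 3$, so the extremal analysis cannot come out right either.

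The missing idea is precisely the perfect-matching constraint on the neighbourhood of $w$: since $uv\in\mathcal{M}$, the vertex $w$ is matched to some $x_{i}$, and any \emph{other} pendant neighbour of $w$ would be unmatched. Hence at most one $x_{i}$ has $d_{x_{i}}=1$ (call this number $r\in\{0,1\}$), and the remaining $d-1-r$ satisfy $d_{x_{i}}\geq 2$. Bounding those summands via $d_{z}\geq 2$ (Lemma~\ref{l-23}) produces exactly $f(d)-f(d-1)$ with $f(x)=(x-1)\sqrt{x^{2}+4}+\sqrt{x^{2}+1}$, and then Lemma~\ref{l-28} (monotonicity of $g(x)=f(x)-f(x+1)$) together with $d\leq\beta+t$ closes the induction; equality forces $r=1$, $d=\beta+t$, all other $d_{x_{i}}=2$, and $G^{*}\cong H^{*}(2\beta-2,t)$, which does rebuild $H^{*}(2\beta,t)$.
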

\begin{proof}
Let $N(u)=\{v,w\}$, $N(w)=\{u,x_{1},x_{2},\cdots,x_{r},x_{r+1},\cdots,x_{d-1}\}$. $d_{x_{i}}=1$ for $1\leq i\leq r$, $d_{x_{i}}\geq 2$ for $r+1\leq i\leq d-1$.
For convienience, denote $d_{w}=d$.
Let $G^{*}=G-v-u$, then $G^{*}\in \mathcal{C}(2\beta-2,t)$. By Lemma \ref{l-23} and \ref{l-28}, we have
\begin{equation}
\begin{split}
SO(G)&=SO(G^{*})+\sum_{i=r+1}^{d-1}(\sqrt{d^{2}+d_{x_{i}}^{2}}-\sqrt{(d-1)^{2}+d_{x_{i}}^{2}})\\
&\quad +r(\sqrt{d^{2}+1^{2}}-\sqrt{(d-1)^{2}+1^{2}})+\sqrt{d^{2}+2^{2}}+\sqrt{2^{2}+1^{2}}\\
&\leq  \Phi(\beta-1,t)+(d-r-1)(\sqrt{d^{2}+2^{2}}-\sqrt{(d-1)^{2}+2^{2}})\\
&\quad +r(\sqrt{d^{2}+1^{2}}-\sqrt{(d-1)^{2}+1^{2}})+\sqrt{d^{2}+2^{2}}+\sqrt{2^{2}+1^{2}}\\
&=  \Phi(\beta,t)+(\beta+t-2)\sqrt{(\beta+t-1)^{2}+2^{2}}+\sqrt{(\beta+t-1)^{2}+1^{2}} \\
&\quad -(\beta+t-1)\sqrt{(\beta+t)^{2}+2^{2}}-\sqrt{(\beta+t)^{2}+1^{2}}\\
&\quad +(d-r-1)(\sqrt{d^{2}+2^{2}}-\sqrt{(d-1)^{2}+2^{2}})\\
&\quad +r(\sqrt{d^{2}+1^{2}}-\sqrt{(d-1)^{2}+1^{2}})+\sqrt{d^{2}+2^{2}}  \\
&=  \Phi(\beta,t)+g(\beta+t-1)+r(\sqrt{d^{2}+1^{2}}-\sqrt{(d-1)^{2}+1^{2}}) \\
&\quad +(d-r-1)(\sqrt{d^{2}+2^{2}}-\sqrt{(d-1)^{2}+2^{2}})+\sqrt{d^{2}+2^{2}}.  \nonumber
\end{split}
\end{equation}
Since $G\in \mathcal{C}(2\beta,t)$, then $r\leq 1$. We consider the following two cases.

\noindent {\bf Case 1}. $r=0$.
\begin{equation}
\begin{split}
SO(G)&\leq \Phi(\beta,t)+g(\beta+t-1)+(d-1)(\sqrt{d^{2}+2^{2}}-\sqrt{(d-1)^{2}+2^{2}})+\sqrt{d^{2}+2^{2}}\\
&=  \Phi(\beta,t)+g(\beta+t-1)-g(d-1)+[(\sqrt{d^{2}+2^{2}}-\sqrt{(d-1)^{2}+2^{2}}) \\
&\quad -(\sqrt{d^{2}+1^{2}}-\sqrt{(d-1)^{2}+1^{2}})]\\
&<\Phi(\beta,t).  \nonumber
\end{split}
\end{equation}

\noindent {\bf Case 2}. $r=1$.
\begin{equation}
\begin{split}
SO(G)&\leq \Phi(\beta,t)+g(\beta+t-1)+(\sqrt{d^{2}+1^{2}}-\sqrt{(d-1)^{2}+1^{2}})\\
&\quad +(d-2)(\sqrt{d^{2}+2^{2}}-\sqrt{(d-1)^{2}+2^{2}})+\sqrt{d^{2}+2^{2}}\\
&=  \Phi(\beta,t)+g(\beta+t-1)-g(d-1) \\
&\leq \Phi(\beta,t),  \nonumber
\end{split}
\end{equation}
with equality iff $G\cong H^{*}(2\beta,t)$.

This completes the proof.
\end{proof}
\ \notag\

\begin{lemma}\label{l-43}
Let $G\in \mathcal{C}(2\beta,t)$ $($$\beta\geq 2$$)$, $\delta(G)=1$ and the degrees of all support vertices are at least $3$, then
$SO(G)\leq \Phi(\beta,t)$,
with equality iff $G\cong H^{*}(2\beta,t)$.
\end{lemma}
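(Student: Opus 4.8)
The plan is to mimic the structure of Lemma~\ref{l-42}, but now the obstructing possibility is that the support vertices all have degree at least $3$; the key is to find a support vertex that is "cheap" to prune together with its pendant vertex and the companion structure forced by the perfect matching $\mathcal{M}$. First I would pick a pendant vertex $v$ whose support vertex $u$ has degree $d_u=d\ge 3$, and observe that since $\mathcal{M}$ is a perfect matching, the edge $uv$ must lie in $\mathcal{M}$ (a pendant edge is always forced into any perfect matching), so every other neighbour of $u$ is matched elsewhere. I would then choose $v$ (equivalently $u$) so that $d$ is as small as possible among all support vertices, or so that $u$ is "extremal" in the cactus in a way that lets me control the degrees of $u$'s other neighbours $x_1,\dots,x_{d-1}$.

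Next I would form $G^{*}=G-v-u$, noting $G^{*}\in \mathcal{C}(2\beta-2,t)$ (removing the matched pendant edge keeps a perfect matching, and deleting $u$ cannot destroy a cycle because $u$ has a pendant neighbour, hence is not on any cycle — this point deserves an explicit sentence). By induction, $SO(G^{*})\le \Phi(\beta-1,t)$. Then I would write
\begin{equation}
SO(G)=SO(G^{*})+\sqrt{d^{2}+2^{2}}+\sqrt{2^{2}+1^{2}}+\sum_{i=1}^{d-1}\bigl(\sqrt{d^{2}+d_{x_i}^{2}}-\sqrt{(d-1)^{2}+d_{x_i}^{2}}\bigr),\nonumber
\end{equation}
using that after deleting $u$ each neighbour $x_i$ drops its degree by $1$. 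By Lemma~\ref{l-23} the summand $\sqrt{d^{2}+d_{x_i}^{2}}-\sqrt{(d-1)^{2}+d_{x_i}^{2}}$ is decreasing in $d_{x_i}$, and since $\delta(G)=1$ but the $x_i$ need not be pendant I would bound each $d_{x_i}\ge 1$ from below, or better, bound the whole block by replacing $d$ with the global maximum degree $\le \beta+t$ exactly as in Lemmas~\ref{l-42} and~\ref{l-41}. This produces an expression of the shape $\Phi(\beta,t)+g(\beta+t-1)-g(d-1)+(\text{small correction})$, where $g$ is the function of Lemma~\ref{l-28}; since $g$ is increasing and $d-1\le \beta+t-1$, the term $g(\beta+t-1)-g(d-1)$ is nonnegative and must be absorbed. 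The correction term should be strictly negative (of the form $(\sqrt{d^{2}+2^{2}}-\sqrt{(d-1)^{2}+2^{2}})-(\sqrt{d^{2}+1^{2}}-\sqrt{(d-1)^{2}+1^{2}})<0$ by Lemma~\ref{l-23} again), giving $SO(G)<\Phi(\beta,t)$ strictly, so that in this lemma the extremal graph is never attained and the "equality iff" clause is vacuous in Case $d\ge 3$ — but to match the statement one must then handle the boundary/degenerate configuration where $G\cong H^{*}(2\beta,t)$ separately and confirm equality there by direct computation of $SO(H^{*}(2\beta,t))=\Phi(\beta,t)$.

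The main obstacle I anticipate is twofold. First, bounding the sum $\sum_{i=1}^{d-1}(\sqrt{d^{2}+d_{x_i}^{2}}-\sqrt{(d-1)^{2}+d_{x_i}^{2}})$: if several $x_i$ have large degree the sum could be smaller than the crude bound, which only helps, but if all $x_i$ have degree exactly $2$ (the worst case) one gets the bound $(d-1)(\sqrt{d^{2}+4}-\sqrt{(d-1)^{2}+4})$, and it must be checked this still yields an expression $\le \Phi(\beta,t)$ after the substitution $d\mapsto \beta+t$; this is the computation encapsulated in Lemma~\ref{l-28}. Second, and more delicately, the induction base: the hypothesis "all support vertices have degree $\ge 3$" need not be preserved under deleting $u$ and $v$, so one cannot literally induct within this lemma alone — instead $SO(G^{*})\le\Phi(\beta-1,t)$ must be invoked as the conclusion of the \emph{combined} result (Lemmas~\ref{l-41}, \ref{l-42}, \ref{l-43} together, i.e. the eventual theorem "$SO(G)\le\Phi(\beta,t)$ for all $G\in\mathcal{C}(2\beta,t)$"), with the three lemmas proved by simultaneous induction on $\beta+t$. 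I would state this explicitly at the start of the proof ("we induct on $\beta+t$, assuming the bound $SO\le\Phi$ for all cacti in $\mathcal{C}(2\beta',t')$ with $\beta'+t'<\beta+t$"), cite Lemmas~\ref{l-26} and~\ref{l-27} for the base cases $t=0,1$, dispose of the small case $\beta=3,t=2$ by inspection, and then run the pruning argument above for $\beta\ge 4$, $t\ge 2$.
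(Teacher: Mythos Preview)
Your reduction $G^{*}=G-u-v$ does not work, for two independent reasons.

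First, the sentence ``deleting $u$ cannot destroy a cycle because $u$ has a pendant neighbour, hence is not on any cycle'' is false. A support vertex can perfectly well lie on cycles; in the extremal graph $H^{*}(2\beta,t)$ with $\beta=t+1$ the unique support vertex is the centre, which lies on all $t$ triangles. More generally, under your hypothesis every support vertex has degree $\ge 3$, and in a cactus a vertex of degree $\ge 3$ is either on a cycle or is a cut vertex separating at least two non-pendant blocks. In the first case deleting $u$ reduces the number of cycles; in the second case $G-u-v$ is disconnected. Either way $G^{*}\notin\mathcal{C}(2\beta-2,t)$ and the induction hypothesis is unavailable. There is no clever choice of $u$ that avoids this: in $H^{*}(2(t+1),t)$ there is only one support vertex to choose.

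Second, the displayed difference formula is lifted verbatim from Lemma~\ref{l-42}, where the support vertex has degree $2$ and $d$ denotes the degree of the \emph{next} vertex $w$. In your setting $d=d_{u}\ge 3$, so the edge $uv$ contributes $\sqrt{d^{2}+1}$, not $\sqrt{2^{2}+1^{2}}$; the edges $ux_{i}$ are \emph{deleted}, not re-weighted from $\sqrt{d^{2}+d_{x_{i}}^{2}}$ to $\sqrt{(d-1)^{2}+d_{x_{i}}^{2}}$; and the drop $d_{x_{i}}\to d_{x_{i}}-1$ forces a correction on every \emph{other} edge incident to each $x_{i}$, none of which appears in your expression. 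The $g$-function bookkeeping that follows is therefore built on a wrong identity.

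The paper proceeds quite differently: it never deletes a high-degree support vertex. Instead it selects a ``terminal'' cycle $C=v_{1}v_{2}\cdots v_{\lambda}v_{1}$ whose interior vertices $v_{2},\dots,v_{\lambda}$ lie on no other cycle (so each has degree $2$ or $3$, the $3$ coming from a pendant forced by the matching), takes $v_{1}v_{2}\notin\mathcal{M}$, and does a case analysis on $\lambda$ and on $(d_{v_{2}},d_{v_{3}})$. The reductions land in $\mathcal{C}(2\beta-2,t-1)$, $\mathcal{C}(2\beta-2,t)$, or $\mathcal{C}(2\beta,t)$ with a shorter cycle, and the equality case arises exactly in the triangle subcase $d_{v_{2}}=d_{v_{3}}=2$ with $r=1$.
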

\begin{proof}
Suppose that $C=v_{1}v_{2}\cdots v_{\lambda}v_{1}$ is such a cycle that $v_{i}(2\leq i\leq \lambda)$ does not on other cycles of $G$,
i.e., $d_{v_{i}}=2$ or $3$ for $2\leq i\leq \lambda$. $3\leq d_{v_{1}}\triangleq d\leq \beta+t$.
Without loss of generality, suppose $v_{1}v_{2}\notin \mathcal{M}$.
In the following, we classify the circle lengths.

\noindent {\bf Case 1}. The circle length equals 3.

Let $N(v_{1})=\{v_{2},v_{3},x_{1},x_{2},\cdots,x_{r},x_{r+1},\cdots,x_{d-2}\}$ where $r=0$ or $1$, $d_{x_{i}}=1$ for $1\leq i\leq r$, $d_{x_{i}}\geq 2$ for $r+1\leq i\leq d-2$.

\noindent {\bf Subcase 1.1}. $d_{v_{2}}=d_{v_{3}}=2$.

Let $G^{*}=G-v_{2}-v_{3}$, then $G^{*}\in \mathcal{C}(2\beta-2,t-1)$, $SO(G^{*})\leq \Phi(\beta-1,t-1)$.
\begin{equation}
\begin{split}
SO(G)&=SO(G^{*})+\sum_{i=r+1}^{d-2}(\sqrt{d^{2}+d_{x_{i}}^{2}}-\sqrt{(d-2)^{2}+d_{x_{i}}^{2}})+r(\sqrt{d^{2}+1^{2}}-\sqrt{(d-2)^{2}+1^{2}})\\
&\quad +2\sqrt{d^{2}+2^{2}}+\sqrt{2^{2}+2^{2}}\\
&\leq  \Phi(\beta,t)+(\beta+t-3)\sqrt{(\beta+t-2)^{2}+2^{2}}+\sqrt{(\beta+t-2)^{2}+1^{2}} \\
&\quad -(\beta+t-1)\sqrt{(\beta+t)^{2}+2^{2}}-\sqrt{(\beta+t)^{2}+1^{2}}+\sum_{i=r+1}^{d-2}(\sqrt{d^{2}+d_{x_{i}}^{2}}-\sqrt{(d-2)^{2}+d_{x_{i}}^{2}}) \\
&\quad +r(\sqrt{d^{2}+1^{2}}-\sqrt{(d-2)^{2}+1^{2}})+2\sqrt{d^{2}+2^{2}}. \nonumber
\end{split}
\end{equation}
\noindent {\bf Subcase 1.11}. $r=0$.

By Lemma \ref{l-23} and \ref{l-28}, we have
\begin{equation}
\begin{split}
SO(G)&\leq \Phi(\beta,t)+(\beta+t-3)\sqrt{(\beta+t-2)^{2}+2^{2}}+\sqrt{(\beta+t-2)^{2}+1^{2}} \\
&\quad -(\beta+t-1)\sqrt{(\beta+t)^{2}+2^{2}}-\sqrt{(\beta+t)^{2}+1^{2}} \\
&\quad +(d-2)(\sqrt{d^{2}+2^{2}}-\sqrt{(d-2)^{2}+2^{2}})+2\sqrt{d^{2}+2^{2}}\\
&=  \Phi(\beta,t)+[f(\beta+t-2)-f(\beta+t)]-[f(d-2)-f(d)] \\
&\quad +[(\sqrt{d^{2}+2^{2}}-\sqrt{(d-2)^{2}+2^{2}})-(\sqrt{d^{2}+1^{2}}-\sqrt{(d-2)^{2}+1^{2}})] \\
& < \Phi(\beta,t)+[g(\beta+t-2)+g(\beta+t-1)]-[g(d-2)+g(d-1)] \\
&=   \Phi(\beta,t)+[g(\beta+t-2)-g(d-2)]+[g(\beta+t-1)-g(d-1)]\\
&\leq \Phi(\beta,t). \nonumber
\end{split}
\end{equation}
\noindent {\bf Subcase 1.12}. $r=1$.

By Lemma \ref{l-23} and \ref{l-28}, we have
\begin{equation}
\begin{split}
SO(G)&\leq \Phi(\beta,t)+(\beta+t-3)\sqrt{(\beta+t-2)^{2}+2^{2}}+\sqrt{(\beta+t-2)^{2}+1^{2}} \\
&\quad -(\beta+t-1)\sqrt{(\beta+t)^{2}+2^{2}}-\sqrt{(\beta+t)^{2}+1^{2}}+(\sqrt{d^{2}+1^{2}}-\sqrt{(d-2)^{2}+1^{2}}) \\
&\quad +(d-3)(\sqrt{d^{2}+2^{2}}-\sqrt{(d-2)^{2}+2^{2}})+2\sqrt{d^{2}+2^{2}}\\
&=  \Phi(\beta,t)+[f(\beta+t-2)-f(\beta+t)]-[f(d-2)-f(d)] \\
&= \Phi(\beta,t)+[g(\beta+t-2)+g(\beta+t-1)]-[g(d-2)+g(d-1)] \\
&=   \Phi(\beta,t)+[g(\beta+t-2)-g(d-2)]+[g(\beta+t-1)-g(d-1)]\\
&\leq \Phi(\beta,t), \nonumber
\end{split}
\end{equation}
with equality iff $G\cong H^{*}(2\beta,t)$.

\noindent {\bf Subcase 1.2}. $d_{v_{2}}=d_{v_{3}}=3$.

Let $N(v_{2})=\{v_{1},v_{3},v_{2}^{'}\}$ and $N(v_{3})=\{v_{1},v_{2},v_{3}^{'}\}$.
Let $G^{*}=G-v_{2}^{'}-v_{3}^{'}$, then $G^{*}\in \mathcal{C}(2\beta-2,t)$, $SO(G^{*})\leq \Phi(\beta-1,t)$.
Note that $\beta\geq 4$, $t\geq 2$.
\begin{equation}
\begin{split}
SO(G)&=SO(G^{*})+2(\sqrt{d^{2}+3^{2}}-\sqrt{d^{2}+2^{2}})+\sqrt{3^{2}+3^{2}}+2\sqrt{3^{2}+1^{2}}-\sqrt{2^{2}+2^{2}}\\
&\leq  \Phi(\beta-1,t)+2(\sqrt{d^{2}+3^{2}}-\sqrt{d^{2}+2^{2}})+2\sqrt{10}+\sqrt{2}\\
&=  \Phi(\beta,t)+g(\beta+t-1)+2(\sqrt{d^{2}+3^{2}}-\sqrt{d^{2}+2^{2}})-\sqrt{5}+2\sqrt{10}+\sqrt{2}  \\
&\leq  \Phi(\beta,t)+g(5)+2(\sqrt{3^{2}+3^{2}}-\sqrt{3^{2}+2^{2}})-\sqrt{5}+2\sqrt{10}+\sqrt{2}  \\
&=  \Phi(\beta,t)+4\sqrt{5^{2}+2^{2}}+\sqrt{5^{2}+1^{2}}-5\sqrt{6^{2}+2^{2}}-\sqrt{6^{2}+1^{2}}\\
&\quad +2(\sqrt{3^{2}+3^{2}}-\sqrt{3^{2}+2^{2}})-\sqrt{5}+2\sqrt{10}+\sqrt{2}  \\
&< \Phi(\beta,t). \nonumber
\end{split}
\end{equation}

\noindent {\bf Subcase 1.3}. $d_{v_{2}}=2$, $d_{v_{3}}=3$.

Let $N(v_{3})=\{v_{1},v_{2},v_{3}^{'}\}$ and $N(v_{1})=\{v_{2},v_{3},x_{1},x_{2},\cdots,x_{d-2}\}$.
Let $G^{*}=G-v_{3}-v_{3}^{'}$, then $G^{*}\in \mathcal{C}(2\beta-2,t-1)$, $SO(G^{*})\leq \Phi(\beta-1,t-1)$.
\begin{equation}
\begin{split}
SO(G)&=SO(G^{*})+\sum_{i=1}^{d-2}(\sqrt{d^{2}+d_{x_{i}}^{2}}-\sqrt{(d-1)^{2}+d_{x_{i}}^{2}})+\sqrt{d^{2}+3^{2}}+\sqrt{d^{2}+2^{2}}\\
&\quad +\sqrt{2^{2}+3^{2}}+\sqrt{1^{2}+3^{2}}-\sqrt{(d-1)^{2}+1^{2}}\\
&\leq  \Phi(\beta-1,t-1)+(d-2)(\sqrt{d^{2}+2^{2}}-\sqrt{(d-1)^{2}+2^{2}})+\sqrt{d^{2}+3^{2}}+\sqrt{d^{2}+2^{2}}\\
&\quad +\sqrt{2^{2}+3^{2}}+\sqrt{1^{2}+3^{2}}-\sqrt{(d-1)^{2}+1^{2}}\\
&\leq  \Phi(\beta,t)+(\beta+t-3)\sqrt{(\beta+t-2)^{2}+2^{2}}+\sqrt{(\beta+t-2)^{2}+1^{2}} \\
&\quad -(\beta+t-1)\sqrt{(\beta+t)^{2}+2^{2}}-\sqrt{(\beta+t)^{2}+1^{2}}+(d-2)(\sqrt{d^{2}+2^{2}}-\sqrt{(d-1)^{2}+2^{2}})\\
&\quad +\sqrt{d^{2}+3^{2}}+\sqrt{d^{2}+2^{2}}+\sqrt{13}+\sqrt{10}-2\sqrt{2}-\sqrt{(d-1)^{2}+1^{2}}\\
&=  \Phi(\beta,t)+[f(\beta+t-2)-f(\beta+t)]+(d-2)(\sqrt{d^{2}+2^{2}}-\sqrt{(d-1)^{2}+2^{2}}) \\
&\quad +\sqrt{d^{2}+3^{2}}+\sqrt{d^{2}+2^{2}}+\sqrt{13}+\sqrt{10}-2\sqrt{2}-\sqrt{(d-1)^{2}+1^{2}}\\
&=  \Phi(\beta,t)+[f(\beta+t-2)-f(\beta+t)]-[f(d-1)-f(d)] \\
&\quad +\sqrt{d^{2}+3^{2}}-\sqrt{d^{2}+1^{2}}+\sqrt{13}+\sqrt{10}-2\sqrt{2}\\
&=  \Phi(\beta,t)+[f(\beta+t-1)-f(\beta+t)]-[f(d-1)-f(d)]+[f(\beta+t-2)-f(\beta+t-1)] \\
&\quad +\sqrt{d^{2}+3^{2}}-\sqrt{d^{2}+1^{2}}+\sqrt{13}+\sqrt{10}-2\sqrt{2}\\
&=  \Phi(\beta,t)+[g(\beta+t-1)-g(d-1)]+g(\beta+t-2) \\
&\quad +\sqrt{d^{2}+3^{2}}-\sqrt{d^{2}+1^{2}}+\sqrt{13}+\sqrt{10}-2\sqrt{2}\\
&\leq  \Phi(\beta,t)+g(\beta+t-2)+\sqrt{d^{2}+3^{2}}-\sqrt{d^{2}+1^{2}}+\sqrt{13}+\sqrt{10}-2\sqrt{2}. \nonumber
\end{split}
\end{equation}

If $d\geq 3$, then
\begin{equation}
\begin{split}
SO(G)&\leq  \Phi(\beta,t)+g(4)+\sqrt{3^{2}+3^{2}}-\sqrt{3^{2}+1^{2}}+\sqrt{13}+\sqrt{10}-2\sqrt{2}\\
&=  \Phi(\beta,t)+3\sqrt{4^{2}+2^{2}}+\sqrt{4^{2}+1^{2}}-4\sqrt{5^{2}+2^{2}}-\sqrt{5^{2}+1^{2}} \\
&\quad +\sqrt{3^{2}+3^{2}}-\sqrt{3^{2}+1^{2}}+\sqrt{13}+\sqrt{10}-2\sqrt{2}\\
&<  \Phi(\beta,t). \nonumber
\end{split}
\end{equation}

If $d=2$, then $\beta=2$ and $t=1$. Thus $G\cong H^{*}(4,1)$.

\noindent {\bf Case 2}. The circle length greater than or equal to 4.

\noindent {\bf Subcase 2.1}. $d_{v_{2}}=3$.

Let $N(v_{2})=\{v_{1},v_{3},v_{2}^{'}\}$, then $d_{v_{3}}=2$ or $3$.

Let $G^{*}=G-v_{2}-v_{2}^{'}+v_{1}v_{3}$, then $G^{*}\in \mathcal{C}(2\beta-2,t)$, $SO(G^{*})\leq \Phi(\beta-1,t)$.
\begin{equation}
\begin{split}
 SO(G)&=SO(G^{*})+\sqrt{d^{2}+3^{2}}+\sqrt{3^{2}+1^{2}}+\sqrt{3^{2}+d_{v_{3}}^{2}}-\sqrt{d^{2}+d_{v_{3}}^{2}}\\
&\leq  \Phi(\beta-1,t)+\sqrt{d^{2}+3^{2}}+\sqrt{3^{2}+1^{2}}+\sqrt{3^{2}+d_{v_{3}}^{2}}-\sqrt{d^{2}+d_{v_{3}}^{2}}\\
&=   \Phi(\beta,t)+(\beta+t-2)\sqrt{(\beta+t-1)^{2}+2^{2}}+\sqrt{(\beta+t-1)^{2}+1^{2}} \\
&\quad -(\beta+t-1)\sqrt{(\beta+t)^{2}+2^{2}}-\sqrt{(\beta+t)^{2}+1^{2}}-\sqrt{5} \\
&\quad +\sqrt{d^{2}+3^{2}}+\sqrt{3^{2}+1^{2}}+\sqrt{3^{2}+d_{v_{3}}^{2}}-\sqrt{d^{2}+d_{v_{3}}^{2}}. \nonumber
\end{split}
\end{equation}

\noindent {\bf Subcase 2.11}. $d_{v_{3}}=2$.
\begin{equation}
\begin{split}
 SO(G)&\leq   \Phi(\beta,t)+(\beta+t-2)\sqrt{(\beta+t-1)^{2}+2^{2}}+\sqrt{(\beta+t-1)^{2}+1^{2}} \\
&\quad -(\beta+t-1)\sqrt{(\beta+t)^{2}+2^{2}}-\sqrt{(\beta+t)^{2}+1^{2}}-\sqrt{5} \\
&\quad +\sqrt{d^{2}+3^{2}}-\sqrt{d^{2}+2^{2}}+\sqrt{10}+\sqrt{13}\\
&\leq  \Phi(\beta,t)+g(\beta+t-1)+3\sqrt{2}+\sqrt{10}-\sqrt{5}\\
&\leq  \Phi(\beta,t)+g(5)+3\sqrt{2}+\sqrt{10}-\sqrt{5}\\
&=     \Phi(\beta,t)+4\sqrt{29}+\sqrt{26}-5\sqrt{40}-\sqrt{37}+3\sqrt{2}+\sqrt{10}-\sqrt{5}\\
&<  \Phi(\beta,t). \nonumber
\end{split}
\end{equation}

\noindent {\bf Subcase 2.12}. $d_{v_{3}}=3$.
\begin{equation}
\begin{split}
 SO(G)&\leq   \Phi(\beta,t)+(\beta+t-2)\sqrt{(\beta+t-1)^{2}+2^{2}}+\sqrt{(\beta+t-1)^{2}+1^{2}} \\
&\quad -(\beta+t-1)\sqrt{(\beta+t)^{2}+2^{2}}-\sqrt{(\beta+t)^{2}+1^{2}}-\sqrt{5}+\sqrt{10}+3\sqrt{2}\\
&\leq  \Phi(\beta,t)+g(\beta+t-1)+3\sqrt{2}+\sqrt{10}-\sqrt{5}\\
&\leq  \Phi(\beta,t)+g(5)+3\sqrt{2}+\sqrt{10}-\sqrt{5}\\
&=     \Phi(\beta,t)+4\sqrt{29}+\sqrt{26}-5\sqrt{40}-\sqrt{37}+3\sqrt{2}+\sqrt{10}-\sqrt{5}\\
&<  \Phi(\beta,t). \nonumber
\end{split}
\end{equation}

\noindent {\bf Subcase 2.2}. $d_{v_{2}}=2$.

Since $v_{1}v_{2}\notin \mathcal{M}$, then $v_{2}v_{3}\in \mathcal{M}$, so $d_{v_{3}}=2$.

Let $G^{*}=G-v_{3}v_{4}+v_{2}v_{4}$, then $G^{*}\in \mathcal{C}(2\beta,t)$.
\begin{equation}
\begin{split}
 SO(G)&=SO(G^{*})+\sqrt{d^{2}+2^{2}}-\sqrt{d^{2}+3^{2}}+\sqrt{2^{2}+2^{2}}+\sqrt{d_{v_{4}}^{2}+2^{2}}-\sqrt{1^{2}+3^{2}}-\sqrt{d_{v_{4}}^{2}+3^{2}}\\
&\leq  SO(G^{*})+\sqrt{d^{2}+2^{2}}-\sqrt{d^{2}+3^{2}}+4\sqrt{2}-\sqrt{10}-\sqrt{13}\\
&<  SO(G^{*})+4\sqrt{2}-\sqrt{10}-\sqrt{13}\\
&<  SO(G^{*}). \nonumber
\end{split}
\end{equation}
When $\lambda-1=3$, by Case 1, we know $SO(G^{*})\leq \Phi(\beta,t)$; When $\lambda-1\geq 4$, by Case 2.1, we know $SO(G^{*})< \Phi(\beta,t)$. Thus we have
$SO(G)< \Phi(\beta,t)$.

This completes the proof.
\end{proof}
\ \notag\

By Lemma \ref{l-41}, \ref{l-42} and \ref{l-43}, we can obtain the maximum Sombor index among cacti $\mathcal{C}(2\beta,t)$.
\begin{theorem}\label{t-44}
Let $G\in \mathcal{C}(2\beta,t)$ $($$\beta\geq 2$$)$, then
$SO(G)\leq \Phi(\beta,t)$,
with equality iff $G\cong H^{*}(2\beta,t)$.
\end{theorem}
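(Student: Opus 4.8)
The plan is to split the argument according to the minimum degree $\delta(G)$ of the cactus $G\in\mathcal{C}(2\beta,t)$, which is exactly the trichotomy already prepared in the three preceding lemmas. If $\delta(G)\geq 2$, then Lemma \ref{l-41} gives the strict inequality $SO(G)<\Phi(\beta,t)$, so in particular $SO(G)\leq\Phi(\beta,t)$ and equality cannot occur; this is fine since $H^{*}(2\beta,t)$ has pendant vertices (hence $\delta=1$) when $\beta>t+1$. If $\delta(G)=1$, a perfect matching $\mathcal{M}$ of $G$ matches every pendant vertex to its unique neighbour, so every support vertex is either a degree-$2$ vertex (one pendant neighbour, one other neighbour) or has degree at least $3$. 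These two possibilities are covered by Lemma \ref{l-42} and Lemma \ref{l-43} respectively, each of which yields $SO(G)\leq\Phi(\beta,t)$ with equality iff $G\cong H^{*}(2\beta,t)$.

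The only genuine content of the theorem beyond quoting the lemmas is to verify that these three cases are exhaustive and that the equality characterization is consistent across them. First I would observe that in a graph with a perfect matching $\delta(G)\in\{1,2,\dots\}$ and we need only distinguish $\delta(G)\geq 2$ from $\delta(G)=1$. In the case $\delta(G)=1$, I would argue that if \emph{some} support vertex has degree $2$ then Lemma \ref{l-42} applies, and otherwise \emph{all} support vertices have degree at least $3$ and Lemma \ref{l-43} applies — these two subcases are mutually exclusive and exhaustive once $\delta(G)=1$. Assembling the three bounds gives $SO(G)\leq\Phi(\beta,t)$ in every case.

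For the equality statement: if $SO(G)=\Phi(\beta,t)$, then by Lemma \ref{l-41} we cannot be in the case $\delta(G)\geq 2$, so $\delta(G)=1$; whichever of Lemma \ref{l-42} or \ref{l-43} applies forces $G\cong H^{*}(2\beta,t)$. Conversely one computes directly that $SO(H^{*}(2\beta,t))=\Phi(\beta,t)$, which is in fact how $\Phi(\beta,t)$ was defined at the start of the section. I do not expect any real obstacle here: all the analytic work — the monotonicity lemmas \ref{l-23}--\ref{l-28} and the delicate induction on $\beta+t$ — has already been discharged inside Lemmas \ref{l-41}, \ref{l-42}, \ref{l-43}. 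The mild point to be careful about is making sure the edge cases of the matching argument (e.g.\ a support vertex adjacent to two pendant vertices is impossible when a perfect matching exists, since only one of them could be matched to it) are stated cleanly, so that the partition of the $\delta(G)=1$ situation into Lemmas \ref{l-42} and \ref{l-43} is airtight.
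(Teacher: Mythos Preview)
Your proposal is correct and follows exactly the paper's approach: the paper's proof of Theorem~\ref{t-44} is literally the one-line remark that it follows from Lemmas~\ref{l-41}, \ref{l-42}, and \ref{l-43}, and you have simply spelled out the (straightforward) case split on $\delta(G)$ and the support-vertex dichotomy that makes those three lemmas exhaustive. There is nothing to add; the analytic content is entirely in the lemmas, as you note.
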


Using a similar way, for the reduced Sombor index, we also have similar result. We omit the proof.
\begin{theorem}\label{t-45}
Let $G\in \mathcal{C}(2\beta,t)$ $($$\beta\geq 2$$)$, then
$SO_{red}(G)\leq SO_{red}(H^{*}(2\beta,t))$,
with equality iff $G\cong H^{*}(2\beta,t)$.
\end{theorem}

\end{document}